\newtheorem{thm}{Theorem}
\newtheorem{lem}[thm]{Lemma}
\newtheorem{pro}[thm]{Proposition}
\newtheorem{cor}[thm]{Corollary}
\theoremstyle{remark}
\newtheorem{rem}[thm]{Remark}
\newcommand{\db}{/\kern -3pt/}
\newcommand{\St}{\widetilde{S}}
\newcommand{\SL}{\mathrm{SL}}
\newcommand{\PSL}{\mathrm{PSL}}
\newcommand{\Rep}{\mathcal{R}}
\newcommand{\Hom}{\mathrm{Hom}}
\newcommand{\ut}{\widetilde{u}}
\newcommand{\CH}{\mathcal C^{\mathrm{H\ddot ol}}}
\begin{document}

\title[Length functions of Hitchin representations]{Length functions \\ of Hitchin representations}

\author{Guillaume Dreyer}
\address {Department of Mathematics,  University of Notre Dame, 255 Hurley Hall, Notre Dame, IN~46556, U.S.A.}
\email{dreyfactor@gmail.com\\ gdreyer@alumni.usc.edu\\ gdreyer@nd.edu}
\date{\today}
\thanks{This research was partially supported by the grant DMS-0604866 from the National Science Foundation.}

\begin{abstract}
Given a Hitchin representation $\rho \colon \pi_1(S) \to \PSL_n(\mathbb{R})$, we construct $n$ continuous functions $\ell_i^\rho \colon \mathcal \CH(S) \to \mathbb{R}$ defined on the space of H\"older geodesic currents $\CH(S)$ such that, for a closed, oriented curve $\gamma$ in $S$, the $i$--th eigenvalue of the matrix $\rho(\gamma)\in \PSL_n(\mathbb{R})$ is of the form $\pm \mathrm{exp}\, \ell_i^\rho(\gamma)$: such functions generalize to higher rank Thurston's length function of Fuchsian re\-presentations. Identities, diffe\-rentiability properties of these lengths $\ell_i^\rho$, as well as applications to eigenvalue estimates, are also considered.
\end{abstract}

\maketitle

Let $S$ be a closed, connected, oriented surface $S$ of genus $g\geq2$. This article is concerned with homomorphisms $\rho \colon \pi_1(S) \to \PSL_n(\mathbb{R})$ from the fundamental group $\pi_1(S)$ to the Lie group $\PSL_n(\mathbb{R})$ (equal to the special linear group $\SL_n(\mathbb{R})$ if $n$ is odd, and to  $\SL_n(\mathbb{R})/\{\pm \mathrm{Id}\}$ if $n$ is even), and more precisely with elements lying in \emph{Hitchin components} $\mathrm{Hit}_n(S)$ of the $\PSL_{n}(\mathbb{R})$--character variety
$$
\Rep_{\PSL_{n}(\mathbb{R})} (S)=\Hom\bigl(\pi_{1}(S),\PSL_{n}(\mathbb{R})\bigr) \db \PSL_{n}(\mathbb{R})
$$
identified by N.~Hitchin \cite{Hit}. Here, the ``double bar'' sign indicates that the precise definition of the character variety $\Rep_{\PSL_{n}(\mathbb{R})}(S) $ requires that the quotient be taken in the sense of geometric invariant theory \cite{Mum}; however, for the component $\mathrm{Hit}_n(S)$ that we are interested in, this quotient construction coincides with the usual topological quotient. 

A \emph{Hitchin component} $\mathrm{Hit}_n(S)$ is defined as a component of $\Rep_{\PSL_{n}(\mathbb{R})}(S)$ that contains some $n$--\emph{Fuchsian representation}, namely some homomorphism $\rho \colon \pi_1(S) \to \PSL_n(\mathbb{R})$ of the form
$$
\rho=\iota\circ r
$$
where: $r\colon \pi_1(S) \to \PSL_2(\mathbb{R})$ is a discrete, injective homomorphism; and $\iota \colon \PSL_2(\mathbb{R})$ $ \to \PSL_n(\mathbb{R})$ is the preferred homomorphism defined by the $n$--dimensional, irreducible representation of $\SL_2(\mathbb{R})$ into $\SL_n(\mathbb{R})$. These components $\mathrm{Hit}_n(S)$ were singled out by N. Hitchin \cite{Hit} who first suggested the interest in studying their elements. We shall refer to elements of $\mathrm{Hit}_n(S)$ as \emph{Hitchin representations}.

Motivations for studying Hitchin representations find their origin in the case where $n=2$. Hitchin components $\mathrm{Hit}_2(S)$ then coincide with \emph{Teichm\"uller components} $\mathcal{T}(S)$ of $\Rep_{\PSL_{2}(\mathbb{R})}(S)$, whose elements, known as \emph{Fuchsian representations}, are of particular interest as they correspond to conjugacy classes of holonomies of marked hyperbolic structures on the surface $S$. In addition, every  element of $\mathcal{T}(S)$ is a discrete, injective homomorphism, and reversely, any such homomorphism lies in some component $\mathcal{T}(S)$ \cite{We,Mar}. It is a result due to W.~Goldman \cite{Gol} that $\Rep_{\PSL_{2}(\mathbb{R})}(S)$ possesses exactly two Tei\-chm\"uller components $\mathcal{T}(S)$; each of these components $\mathcal{T}(S)$ is known to be homeomorphic to $\mathbb{R}^{6g-6}$ \cite{Th1, FLP}. 

In his foundational paper \cite{Hit}, Hitchin proved that, in the case where $n \geq 3$, there is one or two Hitchin components $\mathrm{Hit}_n(S)$ in $\Rep_{\PSL_{n}(\mathbb{R})}(S)$ according to whether $n$ is odd or even, and a beautiful result of Hitchin is that each of these components $\mathrm{Hit}_n(S)$ is homeomorphic to $\mathbb{R}^{(2g-2)(n^2-1)}$. Hitchin's proof is based the theory of Higgs bundles, and as observed by Hitchin, this geometric analysis framework offers no information about the geometry of the elements of $\mathrm{Hit}_n(S)$. The first geometric result about Hitchin representations is to due to S. Choi and W. Goldman \cite{ChGol} who showed that, for $n=3$, the Hitchin component $\mathrm{Hit}_3(S)$ parametrizes the deformation space of \emph{real convex projective structures} on the surface $S$. As a consequence of their work, they showed the faithfulness and the discreetness for the elements of $\mathrm{Hit}_3(S)$.

About a decade ago, F. Labourie \cite{La1} (see also \cite{Gui, GuiW}) proved the following result.

\begin{thm}[Labourie \cite{La1}]
\label{thm:Labourie}
Let $\rho \colon  \pi_{1}(S) \rightarrow \PSL_{n}(\mathbb{R})$ be a Hitchin representation. Then  $\rho$ is discrete and injective. In addition, the image $\rho(\gamma) \in \PSL_n(\mathbb{R})$ of any nontrivial $\gamma \in \pi_{1}(S)$ is diagonalizable, its eigenvalues are all real with distinct absolute values. 
\end{thm}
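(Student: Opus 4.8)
The plan is to deduce all three assertions from a single structural fact — that every Hitchin representation $\rho$ is \emph{Anosov} relative to the minimal parabolic subgroup of $\PSL_n(\mathbb{R})$ — and to prove that fact by a connectedness argument starting from the $n$--Fuchsian case. Concretely, ``Anosov'' will mean here that $\rho$ admits a continuous, $\rho$--equivariant \emph{hyperconvex Frenet limit curve} $\xi_\rho\colon\Sinf\to\mathrm{Flag}(\mathbb{R}^n)$ into the variety of complete flags of $\mathbb{R}^n$ — so that the osculating subspaces $\xi_\rho^{(1)}\subset\cdots\subset\xi_\rho^{(n-1)}$ at distinct points of $\Sinf$ are in direct sum and osculate in the limiting sense — together with the exponential contraction, along periodic orbits of the geodesic flow, that accompanies that word. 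Throughout I would fix an auxiliary hyperbolic metric on $S$, identifying $\St$ with $\hyp$ and $\Sinf$ with $\partial\hyp$, so that $\pi_1(S)$ acts on $\Sinf$ as a convergence group whose nontrivial elements $\gamma$ are all hyperbolic, with distinct attracting and repelling fixed points $\gamma^{+}\neq\gamma^{-}$. (An equivalent curve was produced by Guichard and by Guichard--Wienhard \cite{Gui,GuiW}.)

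For an $n$--Fuchsian $\rho=\iota\circ r$ everything is explicit. The boundary map of $r$ being the identity of $\partial\hyp$, I would precompose the Veronese embedding $\mathbb{R}P^1\hookrightarrow\mathbb{R}P^{n-1}$, $[x:y]\mapsto[x^{n-1}:x^{n-2}y:\cdots:y^{n-1}]$, and its osculating flags, to get a $\rho$--equivariant curve $\xi_\rho\colon\Sinf\to\mathrm{Flag}(\mathbb{R}^n)$; its Frenet property is the non--vanishing of the Vandermonde-type minors of the moment curve, and the contraction is a direct computation from the fact that $r$ is itself Anosov. Since $\iota$ is an injective homomorphism, $\rho$ is discrete and injective because $r$ is; and if a lift of $r(\gamma)$ has eigenvalues $\lambda^{\pm1}$ with $|\lambda|>1$, then $\rho(\gamma)$ has the $n$ eigenvalues $\lambda^{n-1},\lambda^{n-3},\dots,\lambda^{-(n-1)}$, which are real with pairwise distinct absolute values, so $\rho(\gamma)$ is diagonalizable. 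Thus the theorem holds on the nonempty set of $n$--Fuchsian representations.

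The core of the argument is to propagate the Anosov property across $\mathrm{Hit}_n(S)$. Let $A\subset\mathrm{Hit}_n(S)$ be the set of $\rho$ admitting such an equivariant hyperconvex Frenet curve together with the associated periodic-orbit contraction; I have just shown $A\neq\emptyset$. To see $A$ is \emph{open}, I would encode $\xi_\rho$ as a flow--invariant section of the flat $\mathrm{Flag}(\mathbb{R}^n)$--bundle over the unit tangent bundle $T^1S$ with its geodesic flow; the contraction makes this section normally hyperbolic, and normally hyperbolic invariant sections persist under small perturbations of the bundle, i.e.\ under small deformations of $\rho$ — after which one checks the perturbed section is still Frenet, hyperconvexity being preserved because it is a transversality condition propagated by the (contracting) dynamics. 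To see $A$ is \emph{closed}, I would take $\rho_k\to\rho$ with $\rho_k\in A$; the Frenet property confines the $\xi_{\rho_k}$ to a uniformly equicontinuous family of ``monotone'' flag curves, so Arzel\`a--Ascoli gives a subsequential limit $\xi_\rho$ inheriting equivariance, the closed Frenet inequalities, and the contraction along each periodic orbit. As $\mathrm{Hit}_n(S)$ is homeomorphic to a Euclidean space, hence connected, this yields $A=\mathrm{Hit}_n(S)$. \emph{This is the step I expect to be the main obstacle}: turning the explicit Fuchsian model into a genuinely dynamical, structurally stable object valid on the whole component is the technical heart of the result, and precisely the geometric content that the Higgs-bundle construction does not see.

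Granting $\xi_\rho$, I would conclude as follows for an arbitrary $\rho\in\mathrm{Hit}_n(S)$. Being Frenet, $\xi_\rho$ is injective, since distinct points map to transverse flags and hence to distinct lines under $\xi_\rho^{(1)}$; so if $\rho(\gamma)=\mathrm{Id}$ for some $\gamma\neq1$, picking $x$ with $\gamma x\neq x$ gives $\xi_\rho(x)=\rho(\gamma)\xi_\rho(x)=\xi_\rho(\gamma x)$, a contradiction — whence $\rho$ is injective. If $\rho$ were not discrete, I would take pairwise distinct $\gamma_k$ with $\rho(\gamma_k)\to\mathrm{Id}$; the convergence--group dynamics give (after a subsequence) $\gamma_k^{\pm1}\to\eta^{\pm}$ in $\Sinf$, and equivariance, continuity of $\xi_\rho$ and the contraction force $\rho(\gamma_k)$ to converge locally uniformly to the constant $\xi_\rho(\eta^{+})$ away from the flags non--transverse to $\xi_\rho(\eta^{-})$, which is incompatible with $\rho(\gamma_k)\to\mathrm{Id}$ — whence $\rho$ is discrete. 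Finally, for $\gamma\neq1$: since $\gamma^{\pm}$ are the fixed points of $\gamma$ on $\Sinf$, equivariance makes the complete flags $F^{+}=\xi_\rho(\gamma^{+})=(V_1\subset\cdots\subset V_{n-1})$ and $F^{-}=\xi_\rho(\gamma^{-})=(W_1\subset\cdots\subset W_{n-1})$ both $\rho(\gamma)$--invariant, and the Frenet property makes them transverse: $V_i\oplus W_{n-i}=\mathbb{R}^n$ for all $i$. Then the lines $L_i:=V_i\cap W_{n-i+1}$ are $\rho(\gamma)$--invariant with $\mathbb{R}^n=L_1\oplus\cdots\oplus L_n$, so $\rho(\gamma)$ is diagonalizable over $\mathbb{R}$ with eigenvalue $\lambda_i\in\mathbb{R}$ on $L_i$; and the contraction along the periodic orbit carried by the closed geodesic of $\gamma$ says exactly that the ratios $|\lambda_{i+1}/\lambda_i|$ stay bounded away from $1$, so — ordering the $L_i$ so that $V_i=L_1\oplus\cdots\oplus L_i$ — one obtains $|\lambda_1|>\cdots>|\lambda_n|$. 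Hence the eigenvalues of $\rho(\gamma)$ are real with pairwise distinct absolute values, as claimed.
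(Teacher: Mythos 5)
You should first be aware that the paper offers no proof of this statement: Theorem~\ref{thm:Labourie} is imported wholesale from Labourie \cite{La1} (with \cite{Gui,GuiW} cited as alternatives), and the only argument the paper itself supplies is the short discussion after Theorem~\ref{thm:LineBundles}, where the eigenbundle decomposition $V_1\oplus\cdots\oplus V_n$ and its property~(\ref{Eigenline}) are used to see that $\rho(\gamma)$ is diagonalizable with the eigenvalue ordering $|\lambda^\rho_1(\gamma)|>\cdots>|\lambda^\rho_n(\gamma)|$. Your closing paragraph (transverse invariant flags at $\gamma^{\pm}$, the lines $L_i=V_i\cap W_{n-i+1}$, strict eigenvalue gaps from the contraction along the closed orbit) is essentially that same endgame, and your deductions of injectivity and discreteness from an equivariant injective limit curve are standard and sound. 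So the comparison is really between your sketch and Labourie's proof, which the paper only summarizes through the Anosov splitting.

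Measured against that, your proposal has a genuine gap exactly where you flag it, but the gap is worse than ``the main technical obstacle'': the closedness step as you describe it would fail. Hyperconvexity/the Frenet condition is a collection of \emph{open} (strict transversality) conditions on tuples of distinct boundary points, so a subsequential Arzel\`a--Ascoli limit of the curves $\xi_{\rho_k}$ satisfies only the degenerate, non-strict versions; nothing in your argument rules out that distinct points of $\Sinf$ acquire non-transverse (or equal) image flags in the limit, and then injectivity, the transversality of $\xi_\rho(\gamma^+)$ and $\xi_\rho(\gamma^-)$, and the whole endgame collapse. Likewise, ``the contraction along each periodic orbit'' does not pass to the limit without uniform control of the Anosov constants in $k$, and equicontinuity of the $\xi_{\rho_k}$ is itself something to prove, not a consequence of the Frenet property alone. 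Ruling out this degeneration is precisely the content of Labourie's paper (and, for the converse characterization, of Guichard's): it requires genuinely new inputs (in \cite{La1}, curves with ``positive'' crossings and a careful analysis of limits of hyperconvex curves), not a compactness argument. As written, your proof establishes the Fuchsian case, the openness of the Anosov locus, and the correct deduction of the theorem \emph{given} the limit curve, but the closedness — the heart of the theorem — is asserted rather than proved.
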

 
The above statement comes as a consequence (among others) of a remarkable Anosov property for Hitchin representations discovered by Labourie \cite{La1}. More precisely, let $\rho \colon \pi_1(S)\to \PSL_n(\mathbb{R})$ be a Hitchin representation that lifts to $\rho\colon \pi_1(S)\to \SL_n(\mathbb{R})$; consider the flat, twisted $\mathbb{R}^n$--bundle $T^1S\times_\rho \mathbb{R}^n=T^1S\times \mathbb{R}^n / \pi_1(S) \to T^1S$, where $T^1S$ is the unit tangent bundle of $S$; let $(G_t)_{t\in \mathbb{R}}$ on $T^1S\times_\rho \mathbb{R}^n$ be the flow that lifts the geodesic flow $(g_t)_{t\in\mathbb{R}}$ on $T^1S$ via the flat connection. The total space $T^1S\times_\rho\mathbb{R}^n$ splits as a sum of line subbundles $V_1\oplus\cdots\oplus V_n$ with the property that each line subbundle $V_i\to T^1S$ is invariant under the action of the flow $(G_t)_{t\in \mathbb{R}}$. In addition, the action of the flow $(G_t)_{t\in \mathbb{R}}$ is \emph{Anosov} in the following sense: pick a Riemannian metric $\left \Vert \ \right \Vert$ on $T^1S\times_{\rho} \mathbb{R}^n\to T^1S$; there exist some constants $A\geq 0$ and $a>0$ such that, for every $u\in T^1S$, for every unit vectors $X_i(u)\in V_i(u)$ and $X_j(u)\in V_j(u)$, for every $t>0$,
\begin{align*}
\text{if $i>j$, }&\frac{\left \Vert G_t X_j(u)\right \Vert_{g_t(u)}}{\left \Vert G_t X_i(u)\right \Vert_{g_t(u)}} \leq Ae^{-at};\\
\text{if $i<j$, }&\frac{\left \Vert G_{-t} X_j(u)\right \Vert_{g_{-t}(u)}}{\left \Vert G_{-t} X_i(u)\right \Vert_{g_{-t}(u)}} \leq Ae^{-at}.
\end{align*}

\subsection*{Results} Given a Hitchin representation $\rho\colon \pi_1(S) \to \PSL_n(\mathbb{R})$, our main result uses Labourie's dynamical framework to define a family of $n$ \emph{length functions} $\ell^\rho_i$ associated to $\rho$; these length functions extend to Hitchin representations \emph{Thurston's length function} of Fuchsian representations in the Teichm\"uller space $\mathcal{T}(S)$.    

In \cite{Th1}, Thurston considers the space of \emph{measured laminations} $\mathcal{ML}(S)$ of $S$, that is a certain completion of the set of all isotopy classes of simple, closed, unoriented curves in $S$. He then associates to a Fuchsian representation $r \colon \pi_1(S)\to \PSL_2(\mathbb{R})$ in $\mathcal{T}(S)$ a continuous, homogeneous function
$
\ell^r : \mathcal {ML}(S) \to \mathbb{R}
$
such that, for every simple, closed, unoriented curve $\gamma \subset S$, 
$$
\ell^r(\gamma ) = \frac{1}{2} \log \left |\lambda^r_1(\gamma) \right | 
$$
where $\left|\lambda^r_1(\gamma)\right|$ is the largest absolute value of the eigenvalues of $r(\gamma)\in \PSL_2(\mathbb{R})$. Geometrically,  $r \colon \pi_1(S) \to \mathrm{PSL}_2(\mathbb{R})$ is the holonomy of a marked hyperbolic structure $m$ on the surface $S$; the number $\ell^r(\gamma)$ is then the length of the unique, simple, closed, unoriented $m$--geodesic in $S$ that is freely homotopic to the simple, closed, unoriented curve $\gamma$. This length function $\ell^r \colon \mathcal {ML}(S) \to \mathbb{R}$ has proved to be a fundamental tool in the study of $2$ and $3$--dimensional hyperbolic manifolds.

Thurston's length function $\ell^r$ was later extended by F.~Bonahon  \cite{Bon1, Bon2} to the larger space of \emph{measure geodesic currents} $\mathcal C(S)$ of $S$, which is a certain completion of the set of all isotopy classes of closed, oriented curves in $S$. Later, Bonahon also developed in \cite{Bon3} a differential calculus for measured laminations, that is based on \emph{H\"older geodesic currents}. In particular, he obtains differentiability pro\-perties for Thurston's original function $\ell^r \colon \mathcal {ML}(S) \to \mathbb{R}$ by continuously extending $\ell^{r} \colon\mathcal{ML}(S)\to\mathbb{R}$ to the space of H\"older geodesic currents $\CH(S)$ of $S$. 

We generalize these constructions in the case where $\rho\colon \pi_1(S) \to \PSL_n(\mathbb{R})$ is a Hitchin representation. Let $\gamma\in \pi_1(S)$ be nontrivial element; by Theorem~\ref{thm:Labourie}, the eigenvalues $\lambda^{\rho}_i(\gamma)$ of the matrix $\rho(\gamma)\in \PSL_n(\mathbb{R})$ can be indexed so that 
$$
|\lambda^{\rho}_1(\gamma) |> |\lambda^{\rho}_2(\gamma)| > \dots > |\lambda^{\rho}_n(\gamma)|. 
$$

\begin{thm} \textsc{(Length functions)}
\label{thm:MainThm}
Let $\rho \colon\pi_1(S) \to \PSL_n(\mathbb{R})$ be a Hitchin representation, and let $\CH(S)$ be the vector space of H\"older geodesic currents. For every $i=1$, $2$, $\dots$ , $n$,  there exists a continuous, linear function 
$$
\ell^\rho_{i} \colon \CH(S) \rightarrow \mathbb{R}
$$ 
such that, for every  closed, oriented curve $\gamma\subset S$, $\ell^\rho_i(\gamma) = \log |\lambda^{\rho}_i(\gamma)|$. This continuous extension is unique on the space of measure geodesic currents $\mathcal{C}(S)\subset \CH(S)$.
\end{thm}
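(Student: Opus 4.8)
\emph{Strategy.} The plan is to build the length functions out of Labourie's Anosov splitting. Fix a lift of $\rho$ to $\SL_n(\mathbb{R})$, form the flat bundle $E=T^1S\times_\rho\mathbb{R}^n\to T^1S$ with the flow $(G_t)$ covering the geodesic flow $(g_t)$, and recall the $G_t$--invariant decomposition $E=V_1\oplus\cdots\oplus V_n$ into line subbundles satisfying the domination of Theorem~\ref{thm:Labourie}. Two features of this splitting are what the argument uses. First, since each $V_i$ is $G_t$--invariant it is parallel along the orbits of $(g_t)$, and combined with the Anosov estimates this forces $V_i$ to be H\"older continuous on $T^1S$. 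Second, if $\gamma\in\pi_1(S)$ is nontrivial with periodic orbit $\widehat\gamma\subset T^1S$ of period $\ell_0(\gamma)$, then the first--return map $G_{\ell_0(\gamma)}$ of the flat flow along $\widehat\gamma$ is conjugate, via the identification of a fibre $E_u$ with $\mathbb{R}^n$ provided by a lift $\widetilde u$, to $\rho(\gamma)$; it preserves each line $V_i(u)$, and, once the labelling of the $V_i$ is fixed to be compatible with the order $|\lambda^\rho_1(\gamma)|>\cdots>|\lambda^\rho_n(\gamma)|$, it acts on $V_i(u)$ by multiplication by a scalar of absolute value $|\lambda^\rho_i(\gamma)|$.

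Next I would fix a smooth Euclidean metric $\|\cdot\|$ on $E$ and set, for $u\in T^1S$, $t\in\mathbb{R}$ and any $v\in V_i(u)\setminus\{0\}$,
$$
\kappa_i(u,t)=\log\frac{\|G_tv\|_{g_t(u)}}{\|v\|_u},
$$
which is independent of $v$ because $V_i(u)$ is a line. This is an additive cocycle over the geodesic flow, H\"older in $u$ and smooth in $t$ (the latter since $V_i$ is parallel along orbits and $\|\cdot\|$ is smooth), so $\kappa_i(u,t)=\int_0^t f^\rho_i(g_s u)\,ds$ for the H\"older function $f^\rho_i(u)=\frac{d}{dt}\big|_{t=0}\kappa_i(u,t)$ on $T^1S$. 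Integrating $f^\rho_i$ once around $\widehat\gamma$ computes the multiplicative distortion of $G_{\ell_0(\gamma)}$ on $V_i$, so by the second feature above
$$
\int_{\widehat\gamma}f^\rho_i=\kappa_i\bigl(u,\ell_0(\gamma)\bigr)=\log\bigl|\lambda^\rho_i(\gamma)\bigr|,
$$
a value independent of $u\in\widehat\gamma$ and of the auxiliary metric. Replacing $\|\cdot\|$ by another smooth metric changes $\kappa_i$, hence $f^\rho_i$, only by an additive coboundary $b\circ g_t-b$ with $b$ H\"older, which has vanishing periods; so the class of $f^\rho_i$ modulo coboundaries depends on $\rho$ alone.

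To finish, I would invoke Bonahon's theory of H\"older geodesic currents \cite{Bon3}: a H\"older function on $T^1S$ --- equivalently, a H\"older additive cocycle over the geodesic flow --- determines a continuous linear functional on $\CH(S)$ that restricts, on a measure geodesic current (a flow--invariant Radon measure $\mu$ on $T^1S$), to $\mu\mapsto\int_{T^1S}f^\rho_i\,d\mu$, and that sends the current carried by a closed oriented curve $\gamma$ to the period $\int_{\widehat\gamma}f^\rho_i$. Taking $\ell^\rho_i$ to be this functional, the period computation gives $\ell^\rho_i(\gamma)=\log|\lambda^\rho_i(\gamma)|$ for every closed oriented curve, while linearity and continuity on $\CH(S)$ are automatic. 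Since finite positive real combinations of closed oriented geodesics are dense in $\mathcal{C}(S)$ \cite{Bon1}, any continuous function on $\mathcal{C}(S)$ is determined by its values on closed curves, which yields uniqueness of the extension on $\mathcal{C}(S)$; closed curves are not dense in the larger space $\CH(S)$, so uniqueness there is not to be expected.

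\emph{Main obstacle.} The delicate step is the first paragraph --- extracting from Labourie's dynamical Anosov property the concrete facts that each $V_i$ is H\"older on $T^1S$ and that, over every periodic orbit, $G_{\ell_0(\gamma)}$ realises $\rho(\gamma)$ with $V_i(u)$ matched to the $i$--th eigenvalue --- where the index and sign bookkeeping must be handled with care. A secondary point is to verify, within Bonahon's formalism, that $\mu\mapsto\int_{T^1S}f^\rho_i\,d\mu$ really does extend continuously from $\mathcal{C}(S)$ to all of $\CH(S)$; the intervening cocycle manipulations are routine.
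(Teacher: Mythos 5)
Your proposal is correct and follows essentially the same route as the paper: your cocycle generator $f^\rho_i$ is (up to sign) the paper's leafwise $1$--form $\omega_i=-d\log\left\Vert G_tX_i\right\Vert$, the pairing with a H\"older geodesic current that you delegate to Bonahon's formalism is exactly what the paper carries out with flow boxes and a partition of unity, and the period computation over closed orbits, the weak-$*$ continuity, and the uniqueness on $\mathcal{C}(S)$ via density of positive combinations of closed curves all match. The one bookkeeping point you flag is real: with the paper's quotient conventions the return map $G_{t_\gamma}$ acts on $V_i(u)$ by $1/\lambda^\rho_i(\gamma)$ rather than by $\lambda^\rho_i(\gamma)$, which is precisely why the paper puts the minus sign into $\omega_i$; your unsigned cocycle yields $\log|\lambda^\rho_i(\gamma)|$ only after that inverse is accounted for consistently.
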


In addition, let $\mathfrak{R} \colon T^1S \rightarrow T^1S$ be the \emph{orientation reversing involution}, namely $\mathfrak{R}$ is the map defined by $\mathfrak{R}(u)=-u$, where $u\in T^1_{x}S$. For every H\"older geodesic current $\alpha\in \CH(S)$, $\mathfrak{R}^*\alpha$ is the \emph{pullback current} of $\alpha$ under the involution $\mathfrak{R}$.

\begin{thm} \textsc{(Identities)}
\label{thm:properties}
For every H\"older geodesic current $\alpha\in \CH(S)$, 
\begin{enumerate}
\item $\sum_{i=1}^{n}\ell^{\rho}_{i}(\alpha)=0$;
\item
\label{OrientationReversing}
 $\ell_i^{\rho}(\mathfrak{R}^*\alpha)=-\ell_{n-i+1}^{\rho}(\alpha)$.
\end{enumerate}
\end{thm}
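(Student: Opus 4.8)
The plan is to establish both identities first for closed, oriented curves $\gamma \subset S$, where they reduce to statements about eigenvalues of $\rho(\gamma) \in \PSL_n(\mathbb{R})$, and then invoke the continuity and linearity of the $\ell^\rho_i$ from Theorem~\ref{thm:MainThm} together with the density of (rational multiples of) closed curves in $\CH(S)$ to propagate the identities to all H\"older geodesic currents. For part (1), note that $\sum_{i=1}^n \log|\lambda^\rho_i(\gamma)| = \log \prod_{i=1}^n |\lambda^\rho_i(\gamma)| = \log|\det \widetilde\rho(\gamma)|$ for any lift $\widetilde\rho(\gamma) \in \SL_n(\mathbb{R})$; since $\det = 1$, this sum vanishes. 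Hence the linear functional $\alpha \mapsto \sum_{i=1}^n \ell^\rho_i(\alpha)$ is continuous and vanishes on the dense subset spanned by closed curves, so it vanishes identically.

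For part (2), the key observation is geometric: reversing the orientation of a closed curve $\gamma$ replaces $\rho(\gamma)$ by $\rho(\gamma^{-1}) = \rho(\gamma)^{-1}$, whose eigenvalues are the reciprocals $\lambda^\rho_i(\gamma)^{-1}$. Since $|\lambda^\rho_1(\gamma)| > \dots > |\lambda^\rho_n(\gamma)|$, the reciprocals satisfy $|\lambda^\rho_n(\gamma)^{-1}| > \dots > |\lambda^\rho_1(\gamma)^{-1}|$, so with the standard indexing convention one has $\lambda^\rho_i(\gamma^{-1}) = \lambda^\rho_{n-i+1}(\gamma)^{-1}$, and therefore $\log|\lambda^\rho_i(\gamma^{-1})| = -\log|\lambda^\rho_{n-i+1}(\gamma)|$. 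The point I need to make precise is that the orientation-reversing involution $\mathfrak{R}$ on geodesic currents sends the current of an oriented curve $\gamma$ to the current of $\gamma$ with reversed orientation, i.e. $\mathfrak{R}^*(\gamma) = \gamma^{-1}$ as currents; this is immediate from the definition of $\mathfrak{R}(u) = -u$ on $T^1S$ and the identification of an oriented curve with the corresponding $\pi_1(S)$--orbit of geodesics in $\widetilde S$. Granting this, the identity $\ell^\rho_i(\mathfrak{R}^*\gamma) = -\ell^\rho_{n-i+1}(\gamma)$ holds for all closed oriented curves $\gamma$.

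To finish, I would check that $\mathfrak{R}^*$ acts linearly and continuously on $\CH(S)$ — which follows since pullback by a H\"older homeomorphism of $T^1S$ preserves the space of H\"older geodesic currents and acts linearly, being dual to a pushforward of measures — so that both sides of (2) are continuous linear functionals of $\alpha$. Since they agree on the dense subspace spanned by closed oriented curves, they agree on all of $\CH(S)$. The same density-plus-continuity argument closes part (1).

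I expect the main obstacle to be the bookkeeping around the indexing convention and the precise compatibility between the combinatorial operation ``reverse orientation of a curve'' and the analytic operation ``pull back a current by $\mathfrak{R}$'': one must verify that $\mathfrak{R}^*$ indeed intertwines these, and in particular that $\mathfrak{R}^*$ preserves $\CH(S)$ (not just $\mathcal{C}(S)$) and is continuous for the relevant topology, so that the extension from closed curves is legitimate. Everything else is a routine consequence of $\det = 1$, of $\rho(\gamma^{-1}) = \rho(\gamma)^{-1}$, and of the uniqueness/continuity already furnished by Theorem~\ref{thm:MainThm}.
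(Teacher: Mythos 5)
Your verifications on closed curves (part (1) from $\det\widetilde\rho(\gamma)=1$, part (2) from $\rho(\gamma^{-1})=\rho(\gamma)^{-1}$ together with the indexing convention and $\mathfrak{R}^*\gamma=\gamma^{-1}$) are fine, and they are exactly the motivating computations given after the statement of the theorem. The gap is in the extension step: you assert that the linear span of closed, oriented curves is dense in $\CH(S)$, so that a continuous linear functional vanishing on curves vanishes identically. The paper only establishes (and only uses) density of positive linear combinations of curve classes in the space of \emph{measure} geodesic currents $\mathcal{C}(S)$, and accordingly Theorem~\ref{thm:MainThm} and Theorem~\ref{thm:LengthFunctions} claim uniqueness of the continuous extension only on $\mathcal{C}(S)\subset\CH(S)$, not on all of $\CH(S)$. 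The space $\CH(S)$ consists of transverse H\"older \emph{distributions}, which are in general neither measures nor differences of measures, and no density of the span of curve currents in the weak-* topology of $\CH(S)$ is available in the paper (proving such a statement would require a genuinely new input, e.g.\ a Liv\v sic-type coboundary argument, which you neither supply nor cite). As written, your argument proves the two identities only on the closed linear span of the curve currents — in particular on $\mathcal{C}(S)$ — but not for an arbitrary H\"older geodesic current $\alpha$, which is what the theorem asserts.

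The paper avoids density altogether by proving the identities at the level of the $1$--forms $\omega_i$ and then integrating against an arbitrary $\alpha\in\CH(S)$, exploiting the freedom in the choice of the Riemannian metric guaranteed by Lemma~\ref{lem:Metric}. For (1), it chooses the metric so that the $V_i$ are orthogonal and the induced volume form is the flat form $\sigma=dx_1\wedge\cdots\wedge dx_n$ (which is $\pi_1(S)$-- and flow--invariant since $\rho$ lifts to $\SL_n(\mathbb{R})$), whence $\sum_{i=1}^n\omega_i=0$ pointwise along the leaves. For (2), it chooses an $\mathfrak{R}$--invariant metric (lifted from $T^1S/\mathfrak{R}$) and uses Lemma~\ref{lem:LineReversing}, i.e.\ $\widetilde V_i(\ut)=\widetilde V_{n-i+1}(-\ut)$, to get $\mathfrak{R}^*\omega_i=\omega_{n-i+1}$, the sign in the identity coming from the orientation--reversing action of $\mathfrak{R}$ on the leaves. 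If you want to salvage your approach you would need to either prove the missing density statement for $\CH(S)$ or, more simply, adopt this pointwise argument on the forms $\omega_i$.
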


The above two identities are suggested by the case where $\alpha\in \CH(S)$ is a closed, oriented curve $\gamma\in\pi_1(S)$. Indeed, since $\rho(\gamma)\in \PSL_n(\mathbb{R})$, $\sum_{i=1}^{n}\log |\lambda^{\rho}_i(\gamma)|=0$. Moreover, as a consequence of our indexing conventions, $\lambda^{\rho}_i(\gamma^{-1})=1/\lambda^{\rho}_{n-i+1}(\gamma)$, and thus $\log |\lambda^{\rho}_i(\gamma^{-1})|=-\log |\lambda^{\rho}_{n-i+1}(\gamma)|$. 

The continuity property of Theorem~\ref{thm:MainThm} is the fundamental feature of the length functions $\ell_i^{\rho}:\CH(S)\to\mathbb{R}$. As an application of this continuity, we prove the two following results; the proofs use the full force of H\"older geodesic currents.

\begin{thm} \textsc{(Tangentiability)}
\label{Tangentiable}
The functions $\ell^\rho_{i} \colon \CH(S)\to \mathbb{R}$ restrict to functions ${\ell^\rho_{i}}_{| \mathcal{ML}(S)} \colon \mathcal{ML}(S)\to \mathbb{R}$ that are tangentiable, namely, if $(\alpha_{t})_{t\geq0} \subset \mathcal{ML}(S)$ is a smooth $1$--parameter family of measured laminations with tangent vector $\dot{ \alpha_0}=\frac{d}{dt^+}{\alpha_{t}}_{|t=0}$ at $\alpha_0$, then
$$
\frac{d}{d t^+}{\ell_i^{\rho}(\alpha_{t})}_{|t=0}=\ell_i^{\rho}(\dot\alpha_0).
$$
\end{thm}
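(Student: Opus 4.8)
The plan is to derive the tangentiability of ${\ell^\rho_i}_{|\mathcal{ML}(S)}$, with tangent map the restriction of $\ell^\rho_i$ itself, directly from the two defining properties of $\ell^\rho_i$ supplied by Theorem~\ref{thm:MainThm} --- \emph{linearity} and \emph{continuity} on the vector space $\CH(S)$ --- together with Bonahon's differential calculus for measured laminations \cite{Bon3}. Recall that in that framework a smooth $1$--parameter family $(\alpha_t)_{t\geq 0}\subset\mathcal{ML}(S)$ is one whose transverse weights vary smoothly in a system of train track charts carrying a fixed maximal geodesic lamination $\lambda$ that contains the supports of all the $\alpha_t$, and that the tangent vector $\dot\alpha_0\in\CH(S)$ is the transverse H\"older cocycle whose weights are the right derivatives at $t=0$ of those of $\alpha_t$. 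Inside these charts $\mathcal{ML}(S)$ is cut out as a convex cone in a finite-dimensional space of weight vectors sitting linearly in $\CH(S)$; in particular the differences $\alpha_t-\alpha_0$ are well-defined elements of $\CH(S)$ and, by smoothness of the family,
$$
\tfrac{1}{t}\,(\alpha_t-\alpha_0)\ \longrightarrow\ \dot\alpha_0 \quad\text{in }\CH(S)\ \text{ as }\ t\to 0^+.
$$

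Granting this, the argument is immediate. By linearity of $\ell^\rho_i$,
$$
\tfrac{1}{t}\bigl(\ell^\rho_i(\alpha_t)-\ell^\rho_i(\alpha_0)\bigr)=\ell^\rho_i\!\left(\tfrac{1}{t}(\alpha_t-\alpha_0)\right),
$$
and letting $t\to 0^+$ and invoking the continuity of $\ell^\rho_i\colon\CH(S)\to\mathbb{R}$ yields $\frac{d}{dt^+}{\ell^\rho_i(\alpha_t)}_{|t=0}=\ell^\rho_i(\dot\alpha_0)$, as claimed. We stress that this uses the extension of $\ell^\rho_i$ to \emph{all} of $\CH(S)$ in an essential way: a tangent vector $\dot\alpha_0$ to $\mathcal{ML}(S)$ is in general a H\"older geodesic current with weights of mixed sign, hence not a measured current, so it lies outside $\mathcal{C}(S)$ and can be fed only to the H\"older-current length function. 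This is the sense in which the proof requires the full strength of H\"older geodesic currents, rather than the measured currents $\mathcal{C}(S)$ alone.

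The point that demands the most care --- and the main potential obstacle --- is the compatibility of topologies: one must verify that the convergence $\tfrac{1}{t}(\alpha_t-\alpha_0)\to\dot\alpha_0$ built into Bonahon's notion of a smooth family holds in the topology on $\CH(S)$ relative to which Theorem~\ref{thm:MainThm} asserts the continuity of $\ell^\rho_i$. If that continuity is established for the weak topology of $\CH(S)$ --- duality against H\"older test functions, equivalently the chart-wise topology --- then there is nothing further to check, since smoothness of $(\alpha_t)$ is itself a chart-wise condition. If instead $\ell^\rho_i$ is only known to be continuous for a finer, H\"older-norm type topology, one upgrades the convergence of difference quotients accordingly; because $\lambda$ can be taken maximal, the whole discussion then takes place in one fixed finite-dimensional weight space, where all natural topologies agree, so no essential difficulty arises --- but this is the step where the definitions in \cite{Bon3} and in the proof of Theorem~\ref{thm:MainThm} must be carefully aligned. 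A minor preliminary bookkeeping item is the choice of the maximal lamination $\lambda$ absorbing the possibly varying supports of the $\alpha_t$, needed so that $\alpha_t-\alpha_0$ and the tangent cone of $\mathcal{ML}(S)$ at $\alpha_0$ are unambiguous; this is routine in Bonahon's setup.
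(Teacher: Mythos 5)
Your argument is correct and is in essence the same as the paper's: the paper deduces the theorem by combining Theorem~\ref{thm:MainThm} (linearity and continuity of $\ell^\rho_i$ on $\CH(S)$) with Bonahon's criterion (Theorem~\ref{criterion} from \cite{Bon3}), which states precisely that a homogeneous function on $\mathcal{ML}(S)$ admitting a continuous, linear extension to $\CH(S)$ is tangentiable --- exactly the content of your ``difference quotients converge in $\CH(S)$, then apply linearity and continuity'' step. The only difference is one of packaging: where the paper cites Bonahon's criterion as a black box, you sketch its proof (convergence of $\tfrac{1}{t}(\alpha_t-\alpha_0)$ to $\dot\alpha_0$ in the weak-$*$ topology, justified via the fixed maximal lamination and the finite-dimensional weight space), with the technical justification of that convergence correctly attributed to the framework of \cite{Bon3}.
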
 
Finally, we prove the following asymptotic estimate for the eigenvalues of a Hitchin representation. 
\begin{thm}\textsc{(Eigenvalue estimate)}
\label{cor:application}
Let $\rho\colon \pi_1(S) \to \PSL_n(\mathbb{R})$ be a Hitchin representation, and let $\alpha$, $\beta\in \pi_1(S)$. For every $i=1$, $\ldots$ , $n$,  the ratio
$$
\frac{\lambda^\rho_i(\alpha^m \beta)}{\lambda^\rho_i(\alpha)^m}
$$
has a finite limit as $m$ tends to $\infty$. This limit is equal to $e^{\ell_i^\rho(\dot\alpha)}$ for a certain H\"older geodesic current  $\dot\alpha\in\CH(S)$.  
\end{thm}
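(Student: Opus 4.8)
The plan is to recognize the ratio $\lambda^\rho_i(\alpha^m\beta)/\lambda^\rho_i(\alpha)^m$ as the exponential of a difference of length functions evaluated on geodesic currents, and then to exhibit a geodesic current $\dot\alpha$ realizing the limit of those currents in $\CH(S)$. First I would rewrite, using Theorem~\ref{thm:MainThm},
$$
\frac{\lambda^\rho_i(\alpha^m\beta)}{\lambda^\rho_i(\alpha)^m}
= \exp\bigl(\ell^\rho_i(\alpha^m\beta) - m\,\ell^\rho_i(\alpha)\bigr)
= \exp\bigl(\ell^\rho_i(\alpha^m\beta - m\alpha)\bigr),
$$
where in the last equality I use linearity of $\ell^\rho_i$ on $\CH(S)$, viewing the closed curves $\alpha^m\beta$ and $\alpha$ as elements of $\CH(S)$. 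So it suffices to show that the sequence of H\"older geodesic currents $\delta_m := \alpha^m\beta - m\alpha \in \CH(S)$ converges in $\CH(S)$ to some current $\dot\alpha$; then continuity of $\ell^\rho_i$ gives $\ell^\rho_i(\delta_m)\to\ell^\rho_i(\dot\alpha)$ and hence the ratio converges to $e^{\ell^\rho_i(\dot\alpha)}$. The notation $\dot\alpha$ is deliberate: one expects $\dot\alpha$ to be the ``derivative of the twist/earthquake deformation along $\alpha$ applied to $\beta$'', i.e.\ the geodesic current that Bonahon's H\"older current calculus \cite{Bon3} attaches to an infinitesimal deformation; identifying it concretely is part of the task.

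The core of the argument is the convergence $\delta_m\to\dot\alpha$ in $\CH(S)$. I would work in the universal cover: lift $\alpha,\beta$ to loxodromic isometries of $\St$, and describe the geodesic current $\alpha^m\beta$ via the axis of the element $\alpha^m\beta\in\pi_1(S)$ acting on the space of geodesics $\partial_\infty\St^{(2)}$. As $m\to\infty$, the attracting fixed point of $\alpha^m\beta$ converges to the attracting fixed point $\alpha^+$ of $\alpha$, and the repelling fixed point converges to $\beta(\alpha^-)$ (the $\beta$--translate of the repelling fixed point of $\alpha$); more importantly, along most of its length the axis of $\alpha^m\beta$ fellow-travels the axis of $\alpha$ traversed $m$ times, with a bounded-length ``correction'' near the endpoints coming from $\beta$. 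Quantitatively, one wants: for any H\"older test function (or, dually, for the H\"older norm that defines the topology of $\CH(S)$), the current $\alpha^m\beta - m\alpha$ stabilizes at exponential rate because the discrepancy between the axis of $\alpha^m\beta$ and $m$ copies of the axis of $\alpha$ is concentrated in a region of uniformly bounded diameter and the geometry there converges. This is precisely the kind of estimate that the H\"older (as opposed to merely weak-$*$/measure) topology on currents is designed to detect, so I would invoke the completeness of $\CH(S)$ together with a Cauchy estimate $\|\delta_{m+1}-\delta_m\|_{\mathrm{H\ddot ol}} \le C e^{-cm}$, the exponential decay being inherited from the contraction in the closing/shadowing lemma for the geodesic flow (equivalently, from the Anosov estimates underlying Theorem~\ref{thm:Labourie}).

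The main obstacle I anticipate is making the convergence $\alpha^m\beta - m\alpha \to \dot\alpha$ rigorous \emph{in the H\"older topology} rather than just weakly. Weak convergence of $\frac1m(\alpha^m\beta)$ to the probability current supported on the axis of $\alpha$ is standard and easy, but it only yields $\frac1m\ell^\rho_i(\alpha^m\beta)\to\ell^\rho_i(\alpha)$, i.e.\ the first-order asymptotics $\lambda^\rho_i(\alpha^m\beta)^{1/m}\to\lambda^\rho_i(\alpha)$ — not the existence of the \emph{finite} limit of the ratio, which is a second-order statement. Capturing that requires controlling the current $\alpha^m\beta$ up to an $O(e^{-cm})$ error in a norm that sees the H\"older regularity of the limiting ``correction term'' near where the $\beta$--excursion occurs. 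Concretely I would: (i) build an explicit candidate $\dot\alpha$ as a locally finite, $\pi_1(S)$--invariant H\"older distribution on $\partial_\infty\St^{(2)}$ by taking the ``renormalized limit'' of the axes (this is where Bonahon's intersection/H\"older-current formalism from \cite{Bon3} does the heavy lifting), and (ii) prove the exponential Cauchy estimate by a shadowing argument along the geodesic flow, using that distinct eigenvalue moduli (Theorem~\ref{thm:Labourie}) give uniform hyperbolicity. Once $\dot\alpha\in\CH(S)$ is in hand, Theorem~\ref{thm:MainThm}'s continuity of $\ell^\rho_i$ closes the argument immediately, and the remaining bookkeeping — that the displayed limit is exactly $e^{\ell^\rho_i(\dot\alpha)}$ for that same $\dot\alpha$ for every index $i$ simultaneously — is automatic since all $\ell^\rho_i$ are continuous on the one convergent sequence $\delta_m$.
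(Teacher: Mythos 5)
Your proposal follows essentially the same route as the paper: both reduce the statement to the convergence of the currents $\alpha^m\beta - m\alpha$ in $\CH(S)$, obtained from the exponential rate at which the closed leaf of $\alpha^m\beta$ spirals around that of $\alpha$ combined with the H\"older regularity of the test functions (precisely the second-order point you flag), and then conclude by the linearity and continuity of $\ell^\rho_i$ from Theorem~\ref{thm:MainThm}. The only difference is packaging: rather than a Cauchy/completeness argument in a H\"older norm (no such norm is fixed on $\CH(S)$, which carries only the weak-$*$ topology), the paper verifies convergence directly by evaluating $\alpha^m\beta - m\alpha$ against H\"older functions on a transverse disk, using the exponential accumulation of the intersection points, and exhibits the limit $\dot\alpha$ explicitly as a current supported on the closed leaf $\alpha^*$ together with an infinite leaf $(\alpha^\infty\beta)^*$ spiraling onto it --- which is exactly what your exponential estimate delivers test-function-by-test-function.
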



\subsection*{Remarks}

In \cite{Dr1}, we extend to Hitchin representations \emph{Thurston's cataclysm deformations}, which themselves generalize (left) earthquake deformations of hyperbolic structures on surfaces \cite{Th1,Th2}. Given a Hitchin representation $\rho\colon \pi_1(S)\to \PSL_n(\mathbb{R})$, we study various geometric aspects of these cataclysms, and prove a variational formula for the associated length functions $\ell^\rho_i$.

Another motivation for introducing length functions associated to a Hitchin representation is part of the development of a new system of coordinates on Hitchin components $\mathrm{Hit}_n(S)$. In \cite{Hit},  Hitchin showed that $\mathrm{Hit}_n(S)$ is diffeomorphic to $\mathbb{R}^{(2g-2)(n^2-1)}$; his parametrization is based on Higgs bundle techniques, and in particular requires the initial choice of a complex structure on $S$. In a joint work with F. Bonahon \cite{BonDr1,BonDr2}, we construct a geometric, real analytic parametrization of Hitchin components $\mathrm{Hit}_n(\mathbb{R}^n)$. One feature of this parametrization is that it is based on topologi\-cal data only. In essence, our coordinates are an extension of Thurston's shearing coordinates \cite{Th2,Bon1} on the Teichm\"uller space $\mathcal{T}(S)$, combined with Fock-Goncharov's coordinates on moduli spaces of positive, framed, local systems on a punctured surface \cite{FoGo}. In particular, the length functions $\ell^\rho_{i}$ play a crucial r\^ole in analyzing the image of this parametrization.

\subsection*{Acknowledgments} 

I would like to thank my advisor, Francis Bonahon, for his invaluable guidance and support. I also thank the referee for his careful reading of the first draft of this article along with numerous remarks and suggestions. This research was partially supported by the grant DMS-0604866 from the National Science Foundation.



\section{The eigenbundles of a Hitchin representation}
\label{sect:LineBundles}

Our construction makes great use of the machinery developed in \cite{La1}, we thus begin with reviewing some of Labourie's framework. It is convenient to endow the surface $S$ with an arbitrary hyperbolic metric $m_0$. It induces a $m_0$--geodesic flow $(g_t)_{t\in \mathbb{R}}$ on the unit tangent bundle $T^1S$ of $S$; we refer to the associated orbit space as the $m_0$--geodesic foliation $\mathcal{F}$ of $T^1S$. 

Let $\rho\colon \pi_1(S) \to \PSL_n(\mathbb{R})$ be a Hitchin representation. Since $\rho$ lies in the same component as some $n$--Fuchsian representation, it lifts to a representation valued in $\SL_n(\mathbb{R})$, that we still denote by $\rho\colon \pi_1(S) \to \SL_n(\mathbb{R})$; see \cite{Gol} for details. Consider the flat twisted $\mathbb{R}^n$--bundle
$$
T^1 S \times_\rho \mathbb{R}^n = T^1\widetilde S \times \mathbb{R}^n / \pi_1(S)
$$
where: $\widetilde S$ is the universal cover of $S$; and where the action of $\pi_1(S)$ is defined by the property that, for every $ \gamma \in \pi_{1}(S)$, for every  $(u,X) \in T^{1}\widetilde{S}\times\mathbb{R}^{n}$, $\gamma(u,X)=(\gamma u, \rho(\gamma)X)$. Let $(G_{t})_{t\in \mathbb{R}}$ be the flow on the total space $T^{1}{S}\times_{\rho}\mathbb{R}^{n}$ that lifts the geodesic flow $(g_t)_{t\in \mathbb{R}}$ on $T^{1}S$ via the flat connection; here, the ``flatness'' condition means that, if one looks at the situation in the universal cover, the lift $(\widetilde{G}_{t})_{t\in \mathbb{R}}$ acts on $T^1\widetilde{S}\times \mathbb{R}^n$ as the geodesic flow $(\widetilde{g})_{t\in \mathbb{R}}$ on the first factor, and trivially on the second factor.  We shall refer to $T^1 S \times_\rho \mathbb{R}^n\to T^1S$ as the \emph{associated, flat $\mathbb{R}^n$--bundle of the Hitchin representation} $\rho\colon \pi_1(S) \to \PSL_n(\mathbb{R})$.

For every nontrivial $\gamma\in \pi_1(S)$, index the eigenvalues $\lambda^{\rho}_i(\gamma)$ of $\rho(\gamma)\in \PSL_{n}(\mathbb{R})$ as in Theorem~\ref{thm:MainThm} so that
$$
|\lambda^{\rho}_1(\gamma) |> |\lambda^{\rho}_2(\gamma)| > \dots > |\lambda^{\rho}_n(\gamma)|. 
$$

The key tool underlying Labourie's analysis is the following decomposition. 

\begin{thm}[Labourie \cite{La1}]\textsc{(Eigenbundle decomposition)}
\label{thm:LineBundles}
The associated, flat $\mathbb{R}^n$--bundle $p \colon T^{1}S\times_{\rho} \mathbb{R}^{n}\rightarrow T^{1}S$ splits as a sum of $n$ line subbundles $V_{1}\oplus \cdots \oplus V_{n}$ that satisfy the following properties:
\begin{enumerate}
\item Each line subbundle $V_i\to T^1S$ is invariant under the flow $(G_{t})_{t\in \mathbb{R}}$;
\item 
\label{Eigenline}
If $u\in T^1S$ is fixed by $g_{t_0} \colon T^1S \to T^1S$ for some $t_0>0$, and if $\gamma\in \pi_1(S)$ represents the corresponding closed orbit of the geodesic flow, then the lift $G_{t_0}$ acts on the fibre $p^{-1}(u)=V_{1}(u)\oplus \cdots \oplus V_{n}(u)$ by multiplication by $1/\lambda^{\rho}_i(\gamma)$ on the line $V_i(u)$;
\item
\label{LineRegularity}
Each line $V_i(u)$ depends smoothly on $u\in T^1S$ along the leaves of the geodesic foliation $\mathcal{F}$, and is transversally H\"older continuous.
\end{enumerate}
\end{thm}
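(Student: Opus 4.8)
The plan is to deduce the decomposition from the existence of a single $\rho$--equivariant limit curve into the flag manifold, the line bundles being then read off pointwise by a purely algebraic recipe. So the first step is to produce a continuous, $\rho$--equivariant \emph{hyperconvex Frenet curve}
\[
\xi = (\xi^{(1)}, \xi^{(2)}, \dots, \xi^{(n)}) \colon \Sinf \longrightarrow \mathrm{Flag}(\mathbb{R}^n),
\]
assigning to each point $x$ of the boundary at infinity $\Sinf$ a complete flag $\xi^{(1)}(x) \subset \cdots \subset \xi^{(n)}(x) = \mathbb{R}^n$, with the hyperconvexity property that the sum $\xi^{(k_1)}(x_1) + \cdots + \xi^{(k_\ell)}(x_\ell)$ is direct whenever $x_1, \dots, x_\ell$ are distinct and $k_1 + \cdots + k_\ell \le n$, together with the corresponding osculating condition at coinciding points. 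For an $n$--Fuchsian representation this is the Veronese curve $\mathbb{RP}^1 \hookrightarrow \mathrm{Flag}(\mathbb{R}^n)$ induced by the irreducible $\SL_2(\mathbb{R}) \to \SL_n(\mathbb{R})$, and the point is that ``admitting such a curve'' is an open and closed condition on the connected component $\mathrm{Hit}_n(S)$, hence holds for every Hitchin representation.

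Granting $\xi$, the second step defines the bundles. Work in $\St$: a point $\widetilde u \in T^1\St$ is the same as an oriented $m_0$--geodesic, with a repelling endpoint $u_- \in \Sinf$ and an attracting endpoint $u_+ \in \Sinf$. Hyperconvexity makes
\[
\widetilde V_i(\widetilde u) := \xi^{(i)}(u_+) \cap \xi^{(n-i+1)}(u_-)
\]
a line, and $\widetilde V_1(\widetilde u) \oplus \cdots \oplus \widetilde V_n(\widetilde u) = \mathbb{R}^n$. Since $\xi$ is $\rho$--equivariant, the family $\{\widetilde V_i(\widetilde u)\}$ is invariant under the $\pi_1(S)$--action $\gamma \cdot (\widetilde u, X) = (\gamma \widetilde u, \rho(\gamma) X)$ on $T^1\St \times \mathbb{R}^n$, so it descends to line subbundles $V_i \subset T^1S \times_\rho \mathbb{R}^n$. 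The geodesic flow $(\widetilde g_t)$ fixes $u_\pm$ and $(\widetilde G_t)$ acts trivially on the $\mathbb{R}^n$--factor, so $\widetilde V_i(\widetilde u)$ is literally constant along the flow line through $\widetilde u$; this gives (1) at once, and shows that $V_i$ is locally constant --- in particular smooth --- along the leaves of $\mathcal{F}$, which is half of (3).

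For (2), suppose $u$ is periodic of period $t_0 > 0$ and $\gamma \in \pi_1(S)$ represents the orbit; choosing conventions so that $\widetilde g_{t_0}(\widetilde u) = \gamma \widetilde u$ for a lift $\widetilde u$, the point $u_+$ is the attracting fixed point of $\gamma$ on $\Sinf$ and $u_-$ the repelling one. Equivariance of $\xi$ then forces $\xi(u_+)$ to be the attracting flag of $\rho(\gamma)$ and $\xi(u_-)$ its repelling flag --- here one uses Theorem~\ref{thm:Labourie}, itself a consequence of hyperconvexity, to know that $\rho(\gamma)$ is diagonalizable with real eigenvalues of distinct modulus, so that these flags are defined; explicitly, if $E_j$ is the eigenline of $\rho(\gamma)$ for the eigenvalue $\lambda_j^\rho(\gamma)$, then $\xi^{(i)}(u_+) = E_1 \oplus \cdots \oplus E_i$ and $\xi^{(n-i+1)}(u_-) = E_i \oplus \cdots \oplus E_n$, whence $\widetilde V_i(\widetilde u) = E_i$. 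The return map of $G_{t_0}$ on the fibre $p^{-1}(u)$ sends $(\widetilde u, X)$ to $(\gamma \widetilde u, X) = (\widetilde u, \rho(\gamma)^{-1} X)$, i.e.\ it is $\rho(\gamma)^{-1}$, which acts on $V_i(u) = E_i$ by multiplication by $1/\lambda_i^\rho(\gamma)$.

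It remains to prove transverse H\"older continuity of the $V_i$, the second half of (3). The limit curve $\xi$ of an Anosov representation is transversally H\"older: this follows from the H\"older regularity of the weak stable and unstable foliations of the geodesic flow on $T^1S$ combined with the exponential contraction estimates recalled in the introduction. Since $V_i$ is obtained from $\xi$ by applying the (partially defined) intersection map sending a generic pair of flags to a point of $\mathbb{P}(\mathbb{R}^n)$, and this map is real-analytic on the open set of transverse pairs, the composition is again transversally H\"older. The genuinely hard input here --- which I would invoke from \cite{La1} rather than reprove --- is the closedness on $\mathrm{Hit}_n(S)$ of the property of admitting a hyperconvex Frenet limit curve: one must show that a limit of such equivariant curves is again one, which requires a compactness argument for appropriately normalized Frenet curves and a delicate control of the positivity and non-degeneracy that could degenerate in the limit. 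Everything else above is bookkeeping around that statement.
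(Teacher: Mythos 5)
Your proposal is essentially correct, but you should be aware that the paper does not prove this statement at all: it is quoted from Labourie \cite{La1} (see also \cite{Gui, GuiW}), and the only argument the paper supplies is the short flat-connection computation following the theorem, which identifies the return map $G_{t_0}$ on $V_i(u)$ with $\rho(\gamma)^{-1}$ acting on an eigenline, hence multiplication by $1/\lambda^\rho_i(\gamma)$ --- the same bookkeeping you carry out for property (2). What you have written is a faithful outline of how \cite{La1} and \cite{Gui} actually obtain the splitting, namely from the $\rho$--equivariant hyperconvex Frenet curve $\xi\colon\Sinf\to\mathrm{Flag}(\mathbb{R}^n)$ via $\widetilde V_i(\widetilde u)=\xi^{(i)}(u_+)\cap\xi^{(n-i+1)}(u_-)$, and this construction, its flow invariance, and the fibrewise direct sum are all correct. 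Two steps are stated more glibly than they deserve. First, equivariance alone does not force $\xi(u_+)$ to be the attracting flag of $\rho(\gamma)$: $\rho(\gamma)$ fixes every coordinate flag of its eigenbasis, so you need continuity of $\xi$ together with the north--south dynamics of $\gamma$ on $\Sinf$ (or the Anosov contraction itself) to single out the attracting one; without this, property (2) would only give multiplication by $1/\lambda^\rho_{\sigma(i)}(\gamma)$ for an unknown permutation $\sigma$. Second, the H\"older continuity of $\xi$ is not a routine consequence of "contraction estimates plus H\"older foliations" --- it is a substantive part of \cite{La1} --- although once granted, your composition with the endpoint maps (analytic for the auxiliary hyperbolic metric $m_0$) and the algebraic intersection map does yield the transverse H\"older regularity in (3). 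Since these points, together with the existence and closedness of the Frenet curve on $\mathrm{Hit}_n(S)$ that you explicitly black-box, are all imported from \cite{La1}, your write-up should be read as an accurate account of Labourie's proof of the quoted theorem rather than an independent argument --- which is consistent with, and more detailed than, the paper's own treatment.
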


The terminology \emph{eigenbundle decomposition} is motivated by the property (\ref{Eigenline}) that we can make more precise as follows. For every $i=1$, $\ldots$ , $n$, let $\widetilde{V}_i\to T^1\St$ that lifts the line subbundle $V_i\to T^1S$. Let $(\widetilde g_t)_{t\in \mathbb{R}}$ be the lift on the universal cover $T^1\St$ of the geodesic flow $(g_t)_{t\in \mathbb{R}}$ on $T^1S$. Let $u\in T^1S$ that is fixed by $g_{t_0} \colon T^1S\to T^1S$ for some $t_0>0$. Let $\ut \in T^1\St$ that lifts $u\in T^1S$, and let $\gamma\in \pi_1(S)$ be the (unique) nontrivial element such that $\widetilde{g}_{t_0}(\ut)=\gamma \ut$. Because of the flat connection, and the invariance of the line subbundle $V_i\to T^1S$ under the flow $(G_t)_{t\in \mathbb{R}}$, we have that $\widetilde{V}_i(\widetilde{g}_{t_0}(\ut))=\widetilde{V}_i(\ut)$ as lines of $\mathbb{R}^n$. In addition, $\widetilde{V}_i(\widetilde{g}_{t_0}(\ut))=\widetilde{V}_i(\gamma\ut)=\rho(\gamma)\widetilde{V}_i(\ut)$ (it is the equivariance property for the lift $\widetilde{V}_i\to T^1\St$). Hence $\widetilde{V}_i(\ut)\subset \mathbb{R}^n$ is an eigenspace for $\rho(\gamma)\in \PSL_n(\mathbb{R})$, and $\rho(\gamma)$ is diagonalizable. Finally, note that, for every $X\in \mathbb{R}^n$, $(\widetilde{g}_{t_0}(\ut), \widetilde{G}_{t_0}X)=(\gamma \ut, X)$ identifies in the quotient with $(\ut, \rho(\gamma)^{-1}X)$. Therefore,  the lift $G_{t_0}$ acts on the line $V_i(u)$ by multiplication by $1/\lambda^{\rho}_i(\gamma)$.

As a consequence of the above discussion, we make the following observation, that we state as a lemma for future reference.
\begin{lem}
\label{lem:LineReversing}
Let  $\widetilde{V}_i\to T^1\St$ and $\widetilde{V}_{n-i+1}\to T^1\St$ that lift the line subbundles $V_i\to T^1S$ and $V_{n-i+1}\to T^1S$, respectively. For every $u\in T^1S$ that lifts to $\ut\in T^1\widetilde S$, the fibres $\widetilde{V}_i(\ut)$ and $\widetilde{V}_{n-i+1}(-\ut)$ coincide as lines of $\mathbb{R}^n$.
\end{lem}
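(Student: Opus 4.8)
\subsection*{Proof proposal}

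The plan is to prove the identity first for the lifts $\ut$ of periodic points of the geodesic flow, where it can be read off directly from the eigenspace description preceding the lemma, and then to extend it to all of $T^1\St$ by density and continuity.

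First I would record the two elementary properties of the involution $\mathfrak{R}\colon\ut\mapsto -\ut$ on $T^1\St$: it reverses the lifted geodesic flow, $\widetilde g_t\circ\mathfrak{R}=\mathfrak{R}\circ\widetilde g_{-t}$, and it commutes with the action of $\pi_1(S)$, since deck transformations act on $\St$ by orientation-preserving isometries (so $\gamma(-\ut)=-(\gamma\ut)$). Now suppose $u\in T^1S$ is fixed by $g_{t_0}$ for some $t_0>0$, with lift $\ut$ and with $\gamma\in\pi_1(S)$ the nontrivial element satisfying $\widetilde g_{t_0}(\ut)=\gamma\ut$. Then $-\ut$ is a lift of $-u=\mathfrak{R}(u)$, which is also fixed by $g_{t_0}$. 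Applying $\widetilde g_{-t_0}$ to $\widetilde g_{t_0}(\ut)=\gamma\ut$ and using equivariance of the flow gives $\widetilde g_{-t_0}(\ut)=\gamma^{-1}\ut$, whence
$$
\widetilde g_{t_0}(-\ut)=\mathfrak{R}\bigl(\widetilde g_{-t_0}(\ut)\bigr)=\mathfrak{R}(\gamma^{-1}\ut)=\gamma^{-1}(-\ut),
$$
so the nontrivial element associated with the lift $-\ut$ is $\gamma^{-1}$. By the discussion preceding the lemma, $\widetilde V_i(\ut)$ is the eigenspace of $\rho(\gamma)$ for the eigenvalue $\lambda^{\rho}_i(\gamma)$, while $\widetilde V_{n-i+1}(-\ut)$ is the eigenspace of $\rho(\gamma^{-1})=\rho(\gamma)^{-1}$ for the eigenvalue $\lambda^{\rho}_{n-i+1}(\gamma^{-1})$. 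The indexing convention of Theorem~\ref{thm:MainThm} forces $\lambda^{\rho}_{n-i+1}(\gamma^{-1})=1/\lambda^{\rho}_i(\gamma)$, and since $\rho(\gamma)$ is diagonalizable its $\lambda^{\rho}_i(\gamma)$--eigenspace coincides with the $1/\lambda^{\rho}_i(\gamma)$--eigenspace of $\rho(\gamma)^{-1}$. Hence $\widetilde V_i(\ut)=\widetilde V_{n-i+1}(-\ut)$ for every such periodic $\ut$.

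For the general case I would observe that both $\ut\mapsto\widetilde V_i(\ut)$ and $\ut\mapsto\widetilde V_{n-i+1}(-\ut)$ are continuous maps from $T^1\St$ to the space of lines of $\mathbb{R}^n$: continuity of $V_i$ follows from Theorem~\ref{thm:LineBundles}(\ref{LineRegularity}), and $\mathfrak{R}$ is continuous. Since the geodesic flow on $T^1S$ is Anosov, its periodic points are dense in $T^1S$, and their preimages under the covering map $T^1\St\to T^1S$ are therefore dense in $T^1\St$. Two continuous maps that agree on a dense set agree everywhere, which yields the lemma. The only delicate point is the bookkeeping of the index reversal together with the eigenvalue reciprocity $\lambda^{\rho}_{n-i+1}(\gamma^{-1})=1/\lambda^{\rho}_i(\gamma)$ under the chosen ordering, and the verification that $\mathfrak{R}$ reverses the flow and is equivariant; the extension step is then routine.
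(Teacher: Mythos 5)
Your proof is correct and follows essentially the same route as the paper: establish the identity on closed leaves using property (\ref{Eigenline}) of Theorem~\ref{thm:LineBundles} together with the eigenvalue indexing convention $\lambda^{\rho}_{n-i+1}(\gamma^{-1})=1/\lambda^{\rho}_i(\gamma)$, then extend by density of closed orbits and continuity of the line fields. You simply spell out the flow-reversal and equivariance bookkeeping that the paper leaves implicit.
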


\begin{proof}
When $u$ lies in a closed leaf  of the geodesic foliation $\mathcal{F}$, the assertion immediately comes as a consequence of the property (\ref{Eigenline}) of Theorem~\ref{thm:LineBundles}, and from our indexing conventions for the eigenvalues of $\rho(\gamma)\in \PSL_n(\mathbb{R})$ when $\gamma\in \pi_1(S)$. The general case then follows from the latter by density of closed leaves in $T^1S$.
\end{proof}

The existence of an eigenbundle decomposition for the associated, flat $\mathbb{R}$--bundle $T^{1}S\times_{\rho}\mathbb{R}^{n}\to T^1S$ of a Hitchin representation $\rho\colon \pi_1(S) \to \PSL_n(\mathbb{R})$ as in Theorem~\ref{thm:LineBundles} is a consequence of  \emph{Labourie's Anosov property} for Hitchin representations; \cite{La1, Gui, GuiW, Dr1} for additional details.

\section{The length functions of a Hitchin representation}

\subsection{H\"older geodesic currents}
\label{sect:Currents} 
Before tackling the construction of the length functions $\ell^\rho_i  \colon \CH(S) \to \mathbb{R}$, we need to remind the reader of the definition of H\"older geodesic currents; see \cite{Bon2, Bon3, Bon4} for details.

Let  $(X,d)$ be metric space.  A \emph{H\"older distribution} $\alpha$ is a continuous, linear functional on the space of compactly supported, H\"older continuous functions $\phi \colon X\to \mathbb{R}$. A special case of H\"older distributions are \emph{positive Radon measures}, which are linear functionals on the space of compactly supported, continuous functions, and associate to a nonnegative function a nonnegative number. 

The unit tangent bundle $T^1S$ is a $3$--dimensional manifold, and the orbits of the $m_0$--geodesic flow $(g_t)_{t\in \mathbb{R}}$ define a $1$--dimensional foliation $\mathcal F$ of $T^1S$ called its $m_0$--\emph{geodesic foliation}. It turns out that, whereas the geodesic flow depends of the auxiliary me\-tric $m_0$ that we have chosen on $S$, the geodesic foliation does not. Indeed, if another negatively curved metric $m'$ defines a geodesic foliation $\mathcal F'$, there is a homeomorphism of $T^1S$ that sends $\mathcal F$ to $\mathcal F'$. In addition, this homeomorphism can be chosen to be isotopic to the identity, and H\"older bi-continuous; see \cite{Br, Ghys, Gro1, Gro2} for details.

A \emph{H\"older geodesic current} $\alpha$ on $S$ is a transverse H\"older distribution for the geodesic foliation $\mathcal F$, namely $\alpha$ assigns a H\"older distribution $\alpha_D$ on every surface $D\subset T^1S$ transverse to $\mathcal F$. This assignment is invariant under restriction: for any subsurface $D'\subset D$, ${\alpha_D}_{|D'}=\alpha_{D'}$; and is homotopy invariant: for any (H\"older) homotopy $h \colon D\to D''$ from $D$ to another transverse surface $D''$ that preserves $\mathcal F$, $\alpha_D=h^*\alpha_{D''}$ ($h^*\alpha_{D''}$ is the pullback of $\alpha_{D''}$ by $h$).

When the transverse H\"older distribution $\alpha$ is actually a measure $\alpha_D$ for every surface $D\subset T^1S$ transverse to $\mathcal{F}$, the corresponding H\"older geodesic current is a \emph{measure geodesic current} of $S$. Let $\CH(S)$ and $\mathcal C(S)$ be respectively the space of H\"older geodesic currents, and the space of measure geodesic currents. Note that $\CH(S)$ is a (real) vector space, and $\mathcal{C}(S)$ is stable under positive scalar multiplication.

The space of H\"older geodesic currents $\CH(S)$ is endowed with the \emph{weak-* topolo\-gy}, namely the weakest topology for which, for every surface $D\subset T^1S$ transverse to  $\mathcal{F}$, the linear function $\varphi_{D} \mapsto \alpha_D(\varphi_D)$ is continuous, where $\varphi_D$ ranges over all compactly supported, H\"older continuous functions on the surface $D$. 

A typical example of measure geodesic current is provided by the free homotopy class of a closed, oriented curve $\gamma \subset S$. Let $k\geq 0$ be the largest integer such that $\gamma$ is homotopic to a $k$--multiple $\gamma_1^k$ of a closed curve $\gamma_1$.  The homotopically primitive curve $\gamma_1$ is freely homotopic to a unique closed, oriented geodesic, which itself corresponds to a closed leaf $\gamma^*_1$ of the geodesic foliation $\mathcal F$. In particular, we associate to $\gamma_1$ the transverse $1$--weighted Dirac measure for $\mathcal F$ defined by the closed orbit $\gamma^*_1$: for every surface $D$ transverse to $\mathcal F$, the measure ${\gamma_1}_D$ is the counting measure at the intersection points $D\cap \gamma_1^*$. As a result, we associate to $\gamma=\gamma_1^k$  $k$--times  the transverse $1$--weighted Dirac measure associated to $\gamma_1$. Hence the following embedding
$$
\{\text{closed, oriented curves in } S\}/\text{homotopy} \,\subset \mathcal C(S) \subset \CH(S). 
$$
In addition, the set of positive real, linear combinations of multiples of homotopy classes of closed, oriented curves are dense in $\mathcal C(S)$.

Finally, note that to a closed, \emph{unoriented} curve $\bar{\gamma}$ in $S$ corresponds two closed leaves $\gamma^*$ and $\big (\mathfrak{R}(\gamma)\big )^*$ of the geodesic foliation $\mathcal F$, and thus two measure geodesic currents $\gamma$ and $\mathfrak{R}^*\gamma\in \mathcal{C}(S)$ (as in Theorem~\ref{thm:properties}, $\mathfrak{R} \colon T^1S\to T^1S$ denotes the orientation reversing involution, and $\mathfrak{R}^*\colon \CH(S)\to \CH(S)$ is the pullback involution induced by $\mathfrak{R}$). Therefore, the set of  closed, \emph{unoriented} curves in $S$ can be formally embedded in $\CH(S)$ as follows: for every  closed, unoriented curve $\bar{\gamma}\subset S$, 
\begin{eqnarray}
\label{unoriented}
\bar{\gamma}=\frac{1}{2}\gamma+\frac{1}{2}\mathfrak{R}^*\gamma \in \CH(S).
\end{eqnarray}
In particular, the space of measured laminations $\mathcal{ML}(S)$, that is defined as  the closure in $\mathcal{C}(S)$  of the set of  positive real multiples of homotopy classes of  \emph{simple, closed, unoriented} curves in $S$ \cite{Bon1, Bon2}, corresponds to the closure in $\mathcal{C}(S)$ of the set of positive real, linear combinations of elements of the above form (\ref{unoriented}). 

\subsection{Lengths of closed, oriented curves}
\label{sect:LengthsCurves} 
Let $\rho\colon \pi_1(S) \to \PSL_n(\mathbb{R})$ be a Hitchin representation. For a nontrivial $\gamma \in \pi_1(S)$,  Theorem~\ref{thm:Labourie} shows that the eigenvalues $\lambda^{\rho}_i(\gamma)$ of the matrix $\rho(\gamma)\in \PSL_n(\mathbb{R})$ are all real, and can be indexed so that
$$\left | \lambda^{\rho}_{1}(\gamma) \right | > \left | \lambda^{\rho}_{2}(\gamma) \right | > \cdots>\left |\lambda^{\rho}_{n}(\gamma)\right |.$$
Set $\ell_i^\rho(\gamma) = \log |\lambda^{\rho}_i(\gamma)|$. Note that $\ell_i^\rho(\gamma)$ depends only on the conjugacy class of $\gamma \in \pi_1(S)$, and thus depends only on the free homotopy class of the closed, oriented  curve $\gamma \subset S$. We have $n$ maps 
$$
\ell_i^\rho : \{\text{closed, oriented curves in } S\}/\text{homotopy} \to \mathbb{R}.
$$

\subsection{$1$--forms along the geodesic foliation}
\label{sect:Forms} 

We now construct, for every $i=1$, $\ldots$ , $n$, a $1$--form $\omega_i$ along the leaves of the geodesic foliation $\mathcal F$ of $T^1S$.

Given a Hitchin representation $\rho\colon \pi_1(S) \to \PSL_n(\mathbb{R})$, consider its associated flat $\mathbb{R}^n$--bundle $T^1 S \times_\rho \mathbb{R}^n \to T^1S$ as in \S\ref{sect:LineBundles}. Let $(G_t)_{t\in \mathbb{R}}$ the flow on $T^1 S \times_\rho \mathbb{R}^n$ that lifts the geodesic flow on $T^1S$ via the flat connection. Let $V_1$, $V_2$, $\ldots$ , $V_n\to T^1S$ be the line subbundles of the eigenbundle decomposition of Theorem~\ref{thm:LineBundles}; a fundamental property for each of the line subbundles $V_i\to T^1S$ is to be invariant under the action of the flow $(G_t)_{t\in \mathbb{R}}$. Finally, pick a Riemannian metric $\left\Vert \ \right\Vert$ on $T^1S \times_\rho \mathbb{R}^n \to T^1S$.

Let $\mathcal L$ be a leaf of the geodesic foliation $\mathcal F$. Pick a point $u_0\in \mathcal L\subset T^1S$ and a vector $X_i(u_0)$ in the fibre $V_i(u_0)$ of the line subbundle $V_i\to T^1S$. For every $t$  on a neighborhood of $0$ in $\mathbb{R}$, set 
$$
f_{X_i(u_0)} \big ( g_t(u_0) \big ) = \log \left\Vert G_t X_i(u_0) \right\Vert_{g_{t}(u_{0})}.
$$
The above expression defines a function $f_{X_i(u_0)}\colon I\to \mathbb{R}$, where  $I$ is a neighborhood  of $u_0$ in the leaf $\mathcal{L}$. In addition, the fibre $V_i(u)$ depending smoothly on the point $u\in T^1S$ along the leaves of the geodesic foliation $\mathcal F$, the function $f_{X_i(u_0)}$ is smooth along the leaf $\mathcal  L$. For every $u$ on the same neighborhood $I$ of $u_0$ in the leaf $\mathcal  L$, set 
$$
{\omega_i}(u) = -d_uf_{X_i(u_0)}
$$
where the differential $d_uf_{X_i(u_0)}$ is taken along the leaf $\mathcal  L$. 

\begin{lem}
\label{lem:DiffFormDefined}
Defined as above, $\omega_i$ is a well-defined $1$--form along the leaf $\mathcal{L}\subset \mathcal{F}$.
\end{lem}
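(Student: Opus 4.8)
The plan is to check that the local formula $\omega_i(u)=-d_uf_{X_i(u_0)}$ is independent of the two auxiliary choices entering the construction — the base point $u_0\in\mathcal L$ and the nonzero vector $X_i(u_0)\in V_i(u_0)$ — and then to patch the resulting locally defined $1$--forms into a single $1$--form on the leaf $\mathcal L$. First I would dispose of the dependence on $X_i(u_0)$. Replacing $X_i(u_0)$ by $\lambda X_i(u_0)$ with $\lambda\neq0$, the linearity of the flow $(G_t)_{t\in\mathbb R}$ on the fibres of $T^1S\times_\rho\mathbb{R}^n$ gives $G_t\bigl(\lambda X_i(u_0)\bigr)=\lambda\,G_tX_i(u_0)$, hence
$$
f_{\lambda X_i(u_0)}\bigl(g_t(u_0)\bigr)=\log|\lambda|+f_{X_i(u_0)}\bigl(g_t(u_0)\bigr)
$$
on the common domain. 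The two functions differ by the additive constant $\log|\lambda|$, so their differentials along $\mathcal L$ coincide.

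Next I would treat the change of base point. Suppose $u_1=g_s(u_0)$ for some $s$, and pick $X_i(u_1)\in V_i(u_1)\smallsetminus\{0\}$. Since the line subbundle $V_i\to T^1S$ is invariant under $(G_t)_{t\in\mathbb R}$, the vector $G_sX_i(u_0)$ lies in the line $V_i(u_1)$, so $X_i(u_1)=\lambda\,G_sX_i(u_0)$ for some $\lambda\neq0$. Using the flow property $G_{t-s}\circ G_s=G_t$ (with $G_{t-s}$ taken over the point $g_s(u_0)=u_1$), for every $u=g_t(u_0)=g_{t-s}(u_1)$ close to $u_0$ and $u_1$ we obtain
\begin{align*}
f_{X_i(u_1)}\bigl(g_{t-s}(u_1)\bigr)&=\log\left\Vert G_{t-s}\bigl(\lambda\,G_sX_i(u_0)\bigr)\right\Vert_{g_{t-s}(u_1)}\\
&=\log|\lambda|+\log\left\Vert G_tX_i(u_0)\right\Vert_{g_t(u_0)}=\log|\lambda|+f_{X_i(u_0)}\bigl(g_t(u_0)\bigr).
\end{align*}
Again the two functions differ on their common domain by the additive constant $\log|\lambda|$, so they have the same differential along $\mathcal L$; combined with the previous paragraph, this shows that the locally defined $1$--forms $-df_{X_i(u_0)}$ and $-df_{X_i(u_1)}$ agree on the overlap of their domains.

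It remains to patch. By the above, for each point of $\mathcal L$ the $1$--form $\omega_i$ is unambiguously defined on a neighborhood of that point in $\mathcal L$, and these local definitions are mutually consistent on overlaps; since $\mathcal L$ is a connected $1$--manifold, they glue to a well-defined $1$--form $\omega_i$ on all of $\mathcal L$. Its smoothness along $\mathcal L$ is inherited from the smoothness of $u\mapsto V_i(u)$ along the leaves of $\mathcal F$ (property~(\ref{LineRegularity}) of Theorem~\ref{thm:LineBundles}) together with the smoothness of the chosen metric $\left\Vert\ \right\Vert$. The only point requiring care is the base-point change: one must use the $(G_t)$--invariance of $V_i$ to relate nonzero vectors of $V_i(u_0)$ and $V_i(u_1)$ and to apply the flow cocycle identity correctly, and one should keep in mind that although the primitive $f_{X_i(u_0)}$ is in general only a local object — on a closed leaf it is genuinely multivalued, its period being $\pm\ell_i^\rho(\gamma)$ by property~(\ref{Eigenline}) of Theorem~\ref{thm:LineBundles} — its leafwise differential $\omega_i$ is nonetheless single-valued, which is precisely the content of the lemma. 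Everything else is a routine computation with the logarithm.
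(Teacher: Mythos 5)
Your proposal is correct and follows essentially the same route as the paper: check independence under scaling the vector (the functions differ by the additive constant $\log|\lambda|$) and under moving the base point along the flow (using the $(G_t)$--invariance of $V_i$ and the cocycle property $G_{t-s}\circ G_s=G_t$), so the local differentials agree and patch over the leaf. Your version merges the two checks by allowing an arbitrary nonzero vector at the new base point, but this is the same argument as the paper's, just organized slightly differently.
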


\begin{proof}
We must verify that $\omega_{i}$ does not depend on the choices of the point $u_{0}\in I$, and of the vector $X_{i}(u_0) \in V_i(u_0)$.

Since the fibre $V_i(u_0)$ is a line, any other choice $X_i'(u_0)$ for $X_i(u_0)$ is of the form $X_i'(u_0)=cX_i(u_0)$ for some $c\in \mathbb{R}$. Then
$$
f_{X_i'(u_0)} = f_{X_i(u_0)} + \log \left \vert c \right \vert
$$
and $df_{X_i'(u_0)}=df_{X_i(u_0)}$. Hence $\omega_i$ is independent of the choice of $X_i(u_0)\in V_i(u_0)$. 

Let $u_0' = g_{t_0}(u_0) \in I$ be another point, and let $X_i'(u'_0)=G_{t_0}X_i(u_0)\in V_i(u_0')$. Then the functions
$$
f_{u_0', X_i'(u'_0)} = f_{u_0, X_i(u_0)}  
$$
coincide on $I$ since $G_t X_i'(u'_0) = G_{t+t_{0}}X_i(u_0)$, and thus have the same differential along the leaf $\mathcal  L$.
\end{proof}

It follows from Lemma~\ref{lem:DiffFormDefined} that $\omega_i$ is a well-defined $1$--form along the leaves of $\mathcal F$. Moreover, let us also make the following observation regarding the global regularity of the $1$--form $\omega_i$.

\begin{lem}
\label{lem:DiffFormHolder}
The $1$--form $\omega_{i}$ is smooth along the leaves of the geodesic foliation $\mathcal F$, and is transversally H\"older continuous. 
\end{lem}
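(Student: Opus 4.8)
The plan is to read off both regularity statements from the corresponding regularity of the eigenbundle $V_i$ recorded in Theorem~\ref{thm:LineBundles}\eqref{LineRegularity}, together with the defining formula $\omega_i(u) = -d_u f_{X_i(u_0)}$. First, for smoothness along the leaves: fix a leaf $\mathcal{L}$ of $\mathcal{F}$ and a local parametrization $t \mapsto g_t(u_0)$; a local section $t \mapsto X_i(g_t(u_0))$ of $V_i$ along $\mathcal{L}$ can be chosen smoothly in $t$, because $V_i(u)$ varies smoothly along the leaves of $\mathcal{F}$ and the metric $\Vert\,\cdot\,\Vert$ is smooth. Writing $G_t X_i(u_0) = c(t)\, X_i(g_t(u_0))$ for a smooth, nowhere-vanishing scalar function $c(t)$ (the flow $G_t$ acts smoothly and $V_i$ is $G_t$-invariant), we get $f_{X_i(u_0)}(g_t(u_0)) = \log|c(t)| + \log\Vert X_i(g_t(u_0))\Vert_{g_t(u_0)}$, which is a smooth function of $t$; hence $\omega_i = -df_{X_i(u_0)}$ is smooth along $\mathcal{L}$.

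For transverse H\"older continuity: the point is that $\omega_i$ is determined, leafwise, by the infinitesimal rate of change of $\log\Vert X_i\Vert$ under $G_t$, and this rate depends on $u$ only through the line $V_i(u)$ (and the fixed smooth metric and smooth flow). Concretely, I would express $\omega_i(u)$ evaluated on the leafwise unit vector $\dot u = \frac{d}{dt}\big|_{t=0} g_t(u)$ as
$$
\omega_i(u)(\dot u) \;=\; -\,\frac{d}{dt}\Big|_{t=0} \log \big\Vert G_t X_i(u)\big\Vert_{g_t(u)},
$$
for any nonzero $X_i(u)\in V_i(u)$; this is a smooth function of the pair $\big(u, V_i(u)\big)$ taking values in, say, the projectivized bundle of lines in $T^1S\times_\rho\mathbb{R}^n$. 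Since $u\mapsto V_i(u)$ is transversally H\"older continuous by Theorem~\ref{thm:LineBundles}\eqref{LineRegularity}, and composing a transversally H\"older section with a (fibrewise-)smooth map preserves transverse H\"older continuity, it follows that $u \mapsto \omega_i(u)$ is transversally H\"older continuous. One should be slightly careful to phrase this correctly: "smooth along the leaves, transversally H\"older" for a leafwise $1$-form means that in any local flow box $I \times D$ (with $I$ a leafwise interval and $D$ a transversal), writing $\omega_i = a(t,x)\,dt$, the coefficient $a$ is smooth in $t$ and H\"older in $x$, uniformly on compact sets; the displayed formula exhibits $a(t,x)$ as a smooth-in-$t$, H\"older-in-$x$ function precisely because $V_i$ has this property and the remaining ingredients ($G_t$, the metric, the geodesic flow) are smooth in all variables.

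The main obstacle — really the only delicate point — is bookkeeping the two different notions of regularity simultaneously and making sure the H\"older modulus is controlled \emph{locally uniformly} in the leafwise direction, rather than just pointwise. That is: one needs that the map sending a line $L \subset (T^1S\times_\rho\mathbb{R}^n)_u$ to the number $-\frac{d}{dt}|_{t=0}\log\Vert G_t X\Vert$ (for $0\neq X\in L$) is not merely continuous but locally Lipschitz in $L$ with constants that stay bounded as $u$ ranges over a compact flow box, so that the composition with the H\"older section $V_i$ yields a genuinely H\"older coefficient function. This follows from compactness of $T^1S$ and smoothness of all the auxiliary data, but it is the step that deserves an explicit word in the write-up. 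Everything else is a direct unwinding of the definition in Section~\ref{sect:Forms} combined with Theorem~\ref{thm:LineBundles}\eqref{LineRegularity}.
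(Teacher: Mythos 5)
Your argument is correct and follows the same route as the paper, which simply observes that the lemma is an immediate consequence of the regularity property (\ref{LineRegularity}) of Theorem~\ref{thm:LineBundles}; you have merely spelled out the details (smooth leafwise dependence of $V_i$ plus smoothness of the flow and metric give leafwise smoothness, and composing the transversally H\"older line field with fibrewise-smooth data gives transverse H\"older continuity of the coefficient). Your extra care about locally uniform H\"older moduli is a reasonable refinement, but no new idea is involved.
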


\begin{proof}
This is an immediate consequence of the regularity property (\ref{LineRegularity}) of Theorem~\ref{thm:LineBundles}. 
\end{proof}

\subsection{Lengths of H\"older geodesic currents}
\label{sect:LengthsCurrents} 

We now make use of the $1$--forms $\omega_i$ to define the lengths $\ell_i^\rho(\alpha)$ of a H\"older geodesic current $\alpha\in \CH(S)$.

Let $\{\mathcal{U}_{j}\}_{j=1, \dots ,m}$ be a finite family of \emph{flow boxes} $\{\mathcal{U}_{j}\}_{j=1,\dots,m}$ that covers the compact, foliated $3$--manifold $T^1S$. By flow box, we mean an open subset $\mathcal U_j\subset T^1S$ such that there exists a diffeomorphism $\mathcal U_j \cong D_j \times (0,1)$, where: $D_j$ is an open subset of $\mathbb{R}^2$; and where, for every $x\in D_j$, the interval $\{x\} \times(0,1)$ corresponds to an arc in a leaf of $\mathcal F$. Let $\{\xi_j\}_{j=1, \dots, m}$ be a partition of unity subordinate to the open covering $\{\mathcal{U}_{j}\}_{j=1,\dots,m}$. By integrating the $1$--form $\xi_j\omega_i$ along the arcs of leaves in $\mathcal U_j$, we define a function $\phi_j \colon D_j \to \mathbb{R}$ by
$$
\phi_j(x) = \int_{\{x\}\times(0,1)} \xi_j \omega_i.
$$
Note that, by Lemma~\ref{lem:DiffFormHolder}, the above function $\phi_j \colon D_j \to \mathbb{R}$ is H\"older continuous with compact support. The H\"older geodesic current $\alpha$ induces a H\"older distribution $\alpha_{D_j}$ on $D_j$, that we shall still denote by $\alpha$ to alleviate notations. Let us denote the evaluation of $\alpha$ at the function $\phi_j$ by
$$
\alpha(\phi_j)=\int_{\mathcal U_j} \xi_j\omega_i \,d\alpha
$$
where the integral notation is suggested by the case where $\alpha$ is a transverse measure for $\mathcal F$. Finally, set
$$
\ell^{\rho}_{i}(\alpha)=\int_{T^{1}S}\omega_{i}\,d\alpha=\sum_{j=1}^m \int_{\mathcal{U}_{j}}\xi_{j}\omega_{i}\,d\alpha.
$$
By the usual linearity arguments, $\ell^{\rho}_{i}(\alpha)$ is independent of the choice of the open covering $\{\mathcal{U}_{j}\}_{j=1,\dots,m}$ and of the partition of unity $\{\xi_{j}\}_{j=1,\dots,m}$.

\begin{thm}
\label{thm:LengthFunctions}
Defined as above, for every $i=1$, $\ldots$ , $n$,
$$
\ell^{\rho}_{i} \colon \CH(S) \to \mathbb{R}
$$
 is a continuous, linear function on the vector space of H\"older geodesic currents $\CH(S)$ that extends the length $\ell^{\rho}_{i}$ of closed, oriented curves of \S\ref{sect:LengthsCurves}. This continuous extension is unique on the space of measure geodesic currents $\mathcal{C}(S)\subset \CH(S)$. In addition, the length $\ell^{\rho}_{i}$ does not depend on the choice of the Riemannian metric $\left\Vert \ \right\Vert$ on the associated, flat $\mathbb{R}^n$--bundle $T^{1}S\times_{\rho}\mathbb{R}^n\to T^1S$ that defines the $1$--form $\omega_i$ of \S\ref{sect:Forms}.
\end{thm}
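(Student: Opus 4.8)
The plan is to establish the four assertions of Theorem~\ref{thm:LengthFunctions} essentially in the order stated, reducing each to properties of $\omega_i$ and of H\"older geodesic currents that have already been set up. First, \emph{continuity and linearity}: the formula $\ell_i^\rho(\alpha)=\sum_j \alpha(\phi_j)$ exhibits $\ell_i^\rho$ as a finite sum of evaluations $\alpha\mapsto \alpha_{D_j}(\phi_j)$, each of which is by definition of the weak-$*$ topology a continuous linear functional on $\CH(S)$, since $\phi_j$ is a fixed compactly supported H\"older function on $D_j$ (H\"older by Lemma~\ref{lem:DiffFormHolder}, with compact support because $\xi_j$ has compact support in $\mathcal U_j$). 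The only subtlety worth spelling out is that the $\phi_j$ really are H\"older: integrating the leafwise-smooth, transversally H\"older $1$--form $\xi_j\omega_i$ along the plaques $\{x\}\times(0,1)$ produces a function that inherits transverse H\"older regularity from $\omega_i$ — this is the standard transverse-regularity estimate for leafwise integrals of H\"older forms, and I would cite \cite{Bon3, Bon4} for it. Well-definedness (independence of the covering $\{\mathcal U_j\}$ and the partition of unity $\{\xi_j\}$) is the linearity/additivity argument already invoked right before the theorem statement; I would recall that if $\{\xi_j\}$, $\{\xi'_k\}$ are two such partitions then $\{\xi_j\xi'_k\}$ refines both, and the homotopy-invariance of $\alpha_D$ together with $\sum_j\xi_j=1$ identifies the two sums.

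Second, \emph{extension of the length on closed curves}: let $\gamma\subset S$ be a closed oriented curve, $\gamma=\gamma_1^k$ with $\gamma_1$ primitive, corresponding to a closed leaf $\gamma_1^*$ of $\mathcal F$ of period $t_0$ (the $m_0$--length of the geodesic). As a measure geodesic current, $\gamma$ is $k$ times the transverse counting measure on $\gamma_1^*$, so $\ell_i^\rho(\gamma)=k\int_{\gamma_1^*}\omega_i$, where $\int_{\gamma_1^*}\omega_i$ means the total integral of $\omega_i$ once around the closed leaf. By the definition of $\omega_i$ as $-d f_{X_i(u_0)}$ along leaves, this integral telescopes: choosing a basepoint $u_0\in\gamma_1^*$ and a vector $X_i(u_0)\in V_i(u_0)$, going once around sends $X_i(u_0)$ to $G_{t_0}X_i(u_0)$, and by Theorem~\ref{thm:LineBundles}(\ref{Eigenline}) the flow $G_{t_0}$ acts on $V_i(u_0)$ by multiplication by $1/\lambda_i^\rho(\gamma_1)$. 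Hence
$$
\int_{\gamma_1^*}\omega_i = -\bigl(f_{X_i(u_0)}(u_0^{\mathrm{end}}) - f_{X_i(u_0)}(u_0)\bigr)
= -\log\frac{\|G_{t_0}X_i(u_0)\|}{\|X_i(u_0)\|}
= -\log\frac{1}{|\lambda_i^\rho(\gamma_1)|}
= \log|\lambda_i^\rho(\gamma_1)|,
$$
using that the two endpoints are the same point $u_0$ of $T^1S$ so the metric $\|\cdot\|_{u_0}$ is the same at both ends and the ambiguity of $X_i(u_0)$ up to scalar cancels. Therefore $\ell_i^\rho(\gamma)=k\log|\lambda_i^\rho(\gamma_1)|=\log|\lambda_i^\rho(\gamma_1^k)|=\log|\lambda_i^\rho(\gamma)|$, matching \S\ref{sect:LengthsCurves}. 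The one place to be careful is the orientation/indexing bookkeeping, but this is exactly parallel to the discussion following Theorem~\ref{thm:LineBundles} and to Lemma~\ref{lem:LineReversing}.

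Third, \emph{uniqueness on $\mathcal C(S)$}: two continuous functionals on $\mathcal C(S)$ agreeing on all (positive real multiples of) closed oriented curves agree on their closure, and by the density statement recorded in \S\ref{sect:Currents} — positive real linear combinations of multiples of homotopy classes of closed oriented curves are dense in $\mathcal C(S)$ — they agree on all of $\mathcal C(S)$. (On the strictly larger space $\CH(S)$ no such density holds, which is why uniqueness is only claimed on $\mathcal C(S)$.) Fourth, \emph{metric-independence}: if $\|\cdot\|$ and $\|\cdot\|'$ are two Riemannian metrics on $T^1S\times_\rho\mathbb R^n$, write $\lambda(u)=\|X\|'_u/\|X\|_u$ for $X\in V_i(u)$ nonzero — a well-defined positive function on $T^1S$ since $V_i(u)$ is a line, leafwise-smooth and transversally H\"older by Lemma~\ref{lem:DiffFormHolder}-type regularity. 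Then $f'_{X_i(u_0)} = f_{X_i(u_0)} + \log\lambda$ along leaves, so the two leafwise $1$--forms differ by $\omega'_i-\omega_i = -d(\log\lambda)$, a leafwise exact $1$--form whose potential $\log\lambda$ is a globally defined (leafwise-smooth, transversally H\"older) function on $T^1S$. Since such an exact form integrates to zero against any H\"older geodesic current — a leafwise exact form $dh$ with $h$ a genuine function on $T^1S$ pairs to zero with any transverse H\"older distribution by the same telescoping/flow-box argument used for closed curves, together with $\sum_j\xi_j=1$ — we get $\ell_i^{\rho,\|\cdot\|'}=\ell_i^{\rho,\|\cdot\|}$ on $\CH(S)$.

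The main obstacle is the transverse-regularity claim underpinning everything: that integrating the transversally H\"older $1$--form $\xi_i\omega_i$ along plaques yields a genuinely H\"older function $\phi_j$ on the transversal $D_j$, so that it is a legitimate test function for the H\"older distribution $\alpha_{D_j}$, and correspondingly that leafwise-exact forms with global potential pair to zero. This is a real analytic estimate (it is where the precise definition of ``transversally H\"older'' for leafwise forms enters, and where one must control how the local trivializations $\mathcal U_j\cong D_j\times(0,1)$ interact with the H\"older structure), and I would handle it by appealing to the corresponding machinery in Bonahon's papers \cite{Bon3, Bon4} rather than reproving it, citing the relevant statement and indicating the minor adaptation needed for forms valued in the construction above. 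Everything else is the telescoping computation and soft functional-analytic/density arguments sketched here.
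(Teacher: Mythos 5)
Your proposal is correct and follows essentially the same route as the paper: the telescoping computation around a closed leaf using property~(\ref{Eigenline}) of Theorem~\ref{thm:LineBundles}, continuity and linearity read off from $\ell_i^\rho(\alpha)=\sum_j\alpha(\phi_j)$ together with the weak-$*$ topology and density of combinations of closed curves for uniqueness on $\mathcal C(S)$, and metric-independence via the leafwise exact difference $d\log f$ killed by the partition-of-unity/Stokes argument (the paper makes explicit the step you compress, namely replacing $\xi_j\,d\log f$ by $d(\xi_j\log f)$ using $\sum_j d\xi_j=0$ so each plaque integral vanishes). The transverse H\"older regularity of $\phi_j$ that you flag as the main obstacle is exactly what the paper disposes of via Lemma~\ref{lem:DiffFormHolder} and the references to Bonahon's machinery, so no genuinely different ingredient is involved.
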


\begin{proof}[Proof of Theorem~\ref{thm:LengthFunctions}]

We organize the proof in several steps.

\begin{lem}
For every closed, oriented curve $\gamma\subset S$, 
$$
\ell^{\rho}_{i}({\gamma})=\log \left | \lambda^{\rho}_{i}(\gamma) \right |
$$
where: $\ell^{\rho}_{i}({\gamma})$ is the image of the H\"older geodesic current $\gamma\in \CH(S)$ under the function $\ell_i^\rho:\CH(S) \to \mathbb{R}$; and where $\lambda^{\rho}_{i}(\gamma)$ is the $i$--th eigenvalue of $\rho(\gamma)$.
\end{lem}

\begin{proof} We need to return to the definition of the H\"older geodesic current $\gamma \in \CH(S)$.

By homogeneity of the function $\ell_i^\rho$, we can focus attention to the case where the closed, oriented curve $\gamma\subset S$ is homotopically primitive, namely $\gamma$ is not homotopic to a multiple $\gamma_1^k$ of a closed, oriented curve $\gamma_1$ with $k\geq 2$. Thus $\gamma$ determines a closed,  oriented $m_0$--geodesic of $S$, and a closed leaf $\gamma^*$ of the geodesic foliation $\mathcal F$. Identify the closed, oriented curve $\gamma\subset S$ with the H\"older geodesic current $\gamma\in \CH(S)$, which is the transverse $1$--weighted Dirac measure defined by the associated closed leaf $\gamma^*$ (see \S\ref{sect:Currents}).  

By definition of the function $\ell_i^\rho\colon\CH(S) \to \mathbb{R}$,
$$
\ell_i^\rho(\gamma) = \int_{T^1S} \omega_i \,d\gamma = \int_{\gamma^*} \omega_i. 
$$
To compute this integral, pick a point $u_0 \in \gamma^*\subset T^1S$, and a nonzero vector $X_i(u_0) \in V_i(u_0)$ in the fibre of the line subbundle $V_i\to T^1S$. By definition of the $1$--form $\omega_i$ (see \S\ref{sect:Forms}), 
\begin{eqnarray*}
\int_{\gamma^*} \omega_i &=&\int_{0}^{t_\gamma} \frac{d}{ds}\left ( -\log \left \Vert G_s X_i(u_0) \right \Vert_{g_{s}(u_0)}\right )ds\\
&=&\log \left\Vert G_0X_i(u_0)  \right\Vert_{u_0} -  \log \left\Vert G_{t_\gamma}X_i(u_0)  \right\Vert_{g_{t_\gamma}(u_0)}
\end{eqnarray*}
where  $t_{\gamma}$ is the necessary time to go around the closed leaf $\gamma^*\subset \mathcal{F}$ by the geodesic flow $(g_t)_{t\in\mathbb{R}}$, namely $t_\gamma$ is the smallest $t>0$ such that $g_t(u_0) = u_0$. By the property (\ref{Eigenline}) of Theorem~\ref{thm:LineBundles}, $G_{t_\gamma}X_i(u_0) = 1/\lambda^{\rho}_i(\gamma) X_i(u_0)$, and $G_0X_i(u_0)=X_i(u_0)$ since $(G_t)_{t\in \mathbb{R}}$ is a flow, which proves the assertion.
\end{proof}

\begin{lem}
\label{pro:ContHomo}
The function $\ell^{\rho}_{i} \colon \CH(S) \rightarrow \mathbb{R}$ is linear and continuous. Its restriction  ${\ell^{\rho}_{i}}_{|\mathcal{C}(S)} \colon \mathcal{C}(S) \rightarrow \mathbb{R}$ is positively homogeneous, and is the unique, continuous extension to the space of measure geodesic currents $\mathcal{C}(S)\subset \CH(S)$ for the length $\ell^{\rho}_{i}$ of closed, oriented curves of \S\ref{sect:LengthsCurves}. 
\end{lem}

\begin{proof} By construction, for every H\"older geodesic current $\alpha\in \CH(S)$,
$
\ell^{\rho}_{i}(\alpha)=\sum_{j=1}^m \alpha(\phi_{j})
$.
The linearity and homogeneity are immediate. The continuity follows from the definition of the weak-* topology of $\CH(S)$. Finally, since the set of  positive real, linear combinations of multiples of closed, oriented curves are dense in the space of measure geodesic currents $\mathcal C(S)$, the restriction of this continuous extension to $\mathcal C(S)$ is unique. 
\end{proof}

\begin{lem}
\label{lem:Metric}The length $\ell^{\rho}_{i}(\alpha)$ is independent of the choice of the Riemannian metric $\left\Vert \ \right\Vert$ on the associated, flat $\mathbb{R}^n$--bundle $T^{1}S\times_{\rho}\mathbb{R}^n\to T^1S$.
\end{lem}

\begin{proof}
Let  $\left\Vert \ \right\Vert'$ be another Riemannian metric on the associated, flat $\mathbb{R}^n$--bundle $T^{1}S\times_{\rho}\mathbb{R}^n$; it induces another $1$--form $\omega_i'$ along the leaves of the geodesic foliation $\mathcal F$. Since the line $V_i(u)$ depends smoothly on $u\in T^1S$ along the leaves of $\mathcal F$, there exists a positive function $f \colon T^1S \to \mathbb{R}$, smooth along of the leaves of $\mathcal F$, such that, for every $X_{i}(u)\in V_i(u)$, $\left\Vert X_{i}(u) \right\Vert_u = f(u) \left\Vert X_{i}(u) \right\Vert'_u$. As a result, 
$$
\omega_i' = \omega_i - d\log f
$$
which implies that 
$$
\int_{T^{1}S}\omega_{i}^{'}\,d\alpha = \int_{T^{1}S}\omega_{i}\,d\alpha - \int_{T^{1}S} d\log f\,d\alpha.
$$
Since $\sum_{j=1}^m\xi_{j}=1$,
$$
\int_{T^{1}S}{d}\log f\, d\alpha=\sum_{j=1}^m\int_{\mathcal U_j}{d}(\xi_{j}\log f)d\alpha.
$$
For our notation conventions, for every $j=1$, $\ldots$ , $m$,
$$
\int_{\mathcal U_j}{d}(\xi_{j}\log f)d\alpha=\alpha_{D_j} \left ( \int_{ \{ x \} \times (0,1)}{d}(\xi_{j}\log f) \right )
$$
where $\mathcal{U}_j \cong D_j \times (0,1)$. By Stokes's, $\int_{ \{ x \} \times (0,1)}{d}(\xi_{j}\log f)=0$, which proves the assertion.
\end{proof}
This achieves the proof of Theorem~\ref{thm:LengthFunctions}.
\end{proof}

\section{Properties of the length functions}

\subsection{Symetries of the lengths}
We now prove the two properties of Theorem~\ref{thm:properties}.

\begin{pro}
\label{pro:Sum}
For every H\"older geodesic current $\alpha\in \CH(S)$, 
$$
\sum_{i=1}^{n}\ell^{\rho}_{i}(\alpha)=0.
$$
\end{pro}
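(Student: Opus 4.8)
The plan is to reduce the identity $\sum_{i=1}^n \ell^\rho_i(\alpha) = 0$ to the already-proved fact that $\ell^\rho_i$ is linear and continuous on $\CH(S)$, together with density and the obvious statement for closed curves. First I would observe that the function $\alpha \mapsto \sum_{i=1}^n \ell^\rho_i(\alpha)$ is itself a continuous, linear function $\CH(S) \to \mathbb{R}$, being a finite sum of such functions by Lemma~\ref{pro:ContHomo}. So it suffices to check that it vanishes on a set whose span is dense, and the natural candidate is the set of (multiples of) closed, oriented curves, which is dense in $\mathcal C(S)$, hence in $\CH(S)$.

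\begin{proof}
By Lemma~\ref{pro:ContHomo}, each $\ell^\rho_i \colon \CH(S) \to \mathbb{R}$ is linear and continuous, so the map
$$
L \colon \CH(S) \to \mathbb{R}, \qquad L(\alpha) = \sum_{i=1}^n \ell^\rho_i(\alpha)
$$
is linear and continuous as well. For a closed, oriented curve $\gamma \subset S$, Theorem~\ref{thm:LengthFunctions} gives $\ell^\rho_i(\gamma) = \log |\lambda^\rho_i(\gamma)|$, so
$$
L(\gamma) = \sum_{i=1}^n \log |\lambda^\rho_i(\gamma)| = \log \prod_{i=1}^n |\lambda^\rho_i(\gamma)| = \log |\det \rho(\gamma)| = 0,
$$
since $\rho$ lifts to a representation valued in $\SL_n(\mathbb{R})$ and hence $\det \rho(\gamma) = 1$ (for $n$ even this uses that the two lifts of $\rho(\gamma)$ differ by $-\mathrm{Id}$, which does not change the determinant). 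Thus $L$ vanishes on the set of closed, oriented curves, and by linearity it vanishes on the set of all positive real, linear combinations of multiples of homotopy classes of closed, oriented curves. As recalled in \S\ref{sect:Currents}, this set is dense in $\mathcal C(S)$, which in turn is dense in $\CH(S)$; since $L$ is continuous, it follows that $L \equiv 0$ on $\CH(S)$, which is the desired identity.
\end{proof}

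The argument is essentially immediate once the right viewpoint is adopted; the one point that requires a small amount of care is the claim that linear combinations of closed curves are dense in all of $\CH(S)$ (not merely in $\mathcal C(S)$), but this is part of the standard structure theory of H\"older geodesic currents recalled from \cite{Bon2, Bon3, Bon4} in \S\ref{sect:Currents}, so I would simply cite it rather than reprove it. If one preferred to avoid that density statement, an alternative is to prove the pointwise identity directly on the level of $1$--forms: at each $u \in T^1S$ one has $V_1(u) \oplus \cdots \oplus V_n(u) = \mathbb{R}^n$, and a choice of basis adapted to this decomposition identifies $\sum_i \omega_i$ along a leaf with $-d \log |{\det}|$ of the transition along the flow, which is identically zero since $G_t$ is induced by $\SL_n(\mathbb{R})$-valued holonomy; integrating against $\alpha$ then gives the result without any density input. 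I expect no serious obstacle here — the main conceptual work was already done in establishing continuity and linearity of the $\ell^\rho_i$.
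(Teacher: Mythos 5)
Your main argument has a genuine gap at exactly the point you flag as needing ``a small amount of care''. The density recalled in \S\ref{sect:Currents} (and used in Lemma~\ref{pro:ContHomo}) is only that positive real, linear combinations of multiples of closed, oriented curves are dense in the space of \emph{measure} geodesic currents $\mathcal{C}(S)$; neither the paper nor the references it cites assert that $\mathcal{C}(S)$, or the linear span of closed curves, is dense in the much larger space $\CH(S)$ for the weak-* topology. This is precisely why Theorem~\ref{thm:LengthFunctions} claims uniqueness of the continuous extension only on $\mathcal{C}(S)\subset\CH(S)$: if your density statement were available, the extension would be unique on all of $\CH(S)$ and that restriction would be pointless. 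So your continuity-plus-density argument proves $\sum_i\ell^\rho_i(\alpha)=0$ for $\alpha\in\mathcal{C}(S)$, but not for a general H\"older geodesic current, and the general case is the whole point of the proposition (the currents one cares about later, e.g.\ the tangent vectors $\dot\alpha$ to $\mathcal{ML}(S)$ in Theorems~\ref{Tangentiable} and~\ref{pro:application}, are typically not measures). Justifying density of the span of closed curves in $\CH(S)$ would amount to a Livšic-type theorem for transverse H\"older distributions, a nontrivial extra input that cannot simply be cited.

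Your alternative sketch at the end is essentially the paper's actual proof, and it is the route you should take, but as written it is incomplete. The paper endows the fibres with the flat volume form $\sigma=dx_1\wedge\cdots\wedge dx_n$, which is parallel under $(G_t)$ because $\rho$ lifts to $\SL_n(\mathbb{R})$ and the flow is flat, and then invokes Lemma~\ref{lem:Metric} (independence of the metric) to \emph{choose} the Riemannian metric so that the $V_i$ are orthogonal and its volume form equals $\sigma$. Only with such an adapted metric does one get $\prod_{i=1}^n\Vert G_tX_i(u)\Vert_{g_t(u)}=\sigma_{g_t(u)}\bigl(G_tX_1(u),\dots,G_tX_n(u)\bigr)$, which is constant in $t$, hence $\sum_i\omega_i=0$ pointwise and the identity follows by integrating against any $\alpha$, with no density input. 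For an arbitrary metric the product of the norms differs from the volume by a Gram/angle factor, and one would need an additional Stokes-type argument along the leaves (as in the proof of Lemma~\ref{lem:Metric}) to dispose of the resulting leafwise-exact correction; your one-sentence version elides this step.
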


\begin{proof}
Endow each fibre $\{ \widetilde{u}\}\times\mathbb{R}^n$ of the trivial bundle $T^1\widetilde{S}\times\mathbb{R}^n\to T^1\St$ with the canonical volume form $\sigma=dx_1\wedge dx_2\wedge\ldots\wedge dx_n$ of $\mathbb{R}^n$. Recall that $\pi_1(S)$ acts on $T^1\widetilde{S}\times\mathbb{R}^n$ via the diagonal action. Since each $\rho(\gamma)$ is in $ \SL_n(\mathbb{R})$, the form $\sigma$ is invariant under the action of $\pi_1(S)$. In addition, because of the flat connection, the lift $(\widetilde{G}_t)_{t\in \mathbb{R}}$ on $T^1\widetilde{S}\times\mathbb{R}^n$ of the geodesic flow $(\widetilde{g}_t)_{t\in \mathbb{R}}$ on $T^1\St$ acts trivially on the factor $\mathbb{R}^n$ of $T^1\widetilde{S}\times\mathbb{R}^n$, and consequently preserves $\sigma$. As a result, $\sigma$ descends to a well-defined $G_t$--invariant volume form on the fibres of the bundle $T^1S\times_{\rho}\mathbb{R}^n$.

Recall that the length functions $\ell^{\rho}_{i} \colon \CH(S)\to \mathbb{R}$ are independent of the choice of the Riemannian metric $\left\Vert \ \right \Vert$ on the bundle $T^1S\times_{\rho}\mathbb{R}^n$. Without loss of generality, we can arrange that the line subbundles $V_{i}$ are orthogonal for $\left\Vert \ \right \Vert$, and that the volume form defined by $\left\Vert \ \right \Vert$ coincides with the volume form $\sigma$.

By definition of the $1$--form $\omega_i$, for every  $u\in T^1S$, for every vector  $X_i(u)\in V_i(u)$, 
\begin{align*}
\sum_{i=1}^{n}\omega_{i}(u)&=\sum_{i=1}^{n} {\frac{d}{dt} \left ( - \log  \left \Vert G_t X_i(u) \right \Vert_{g_{t}(u)}\right )dt}_{|t=0}\\
&=-{\frac{d}{dt}\log \left ( \prod_{i=1}^{n}\left \Vert G_t X_i(u) \right \Vert_{g_{t}(u)}\right )dt}_{|t=0}\\
&=-{\frac{d}{dt}\log \left ( \sigma_{g_t(u)} \big (G_t X_1(u), G_t X_2(u), \ldots , G_t X_n(u)\big ) \right )dt}_{|t=0}\\ 
&=-{\frac{d}{dt}\log \left ( \sigma_{u} \big  ( X_1(u),X_2(u), \ldots ,X_n(u) \big ) \right )dt}_{|t=0}\\ 
&=0
\end{align*}
By integrating, it follows that,  for every H\"older geodesic current $\alpha \in \CH(S)$, $\sum_{i=1}^{n}\ell^{\rho}_{i}(\alpha)=0
$.
\end{proof}

The unit tangent space $T^1S$ comes endowed with a natural, fiberwise involution $\mathfrak{R}\colon T^1S \rightarrow T^1S$, which to $u\in T^1_xS$ associates $\mathfrak{R}(u)=-u$: it is the \emph{orientation reversing involution}. In particular, $\mathfrak{R}$ respects the geodesic foliation $\mathcal F$, and thus induces an involution $\mathfrak{R}^* \colon \CH(S)\to \CH(S)$ defined as follows: for every H\"older geodesic current $\alpha \in \CH(S)$, $\mathfrak{R}^*\alpha$ is the \emph{pullback current} of $\alpha$ under the involution $\mathfrak{R}$, namely, if $\varphi \colon D\to\mathbb{R}$ is H\"older continuous with compact support defined on a transverse surface $D\subset T^1S$, then  $\mathfrak{R}^*\alpha(\varphi)=\alpha(\varphi\circ \mathfrak{R})$. Note that  the restriction to each oriented leaf of the geodesic foliation $\mathcal{F}$ of the involution $\mathfrak{R}$ is orientation reversing.

\begin{pro}
\label{Prop15}
For every H\"older geodesic current  $\alpha\in\CH(S)$,
$$
\ell_i^{\rho}(\mathfrak{R}^*\alpha)=-\ell_{n-i+1}^{\rho}(\alpha). 
$$
\end{pro}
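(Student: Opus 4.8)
The plan is to reduce the identity to a statement about the $1$-forms $\omega_i$ and $\omega_{n-i+1}$ under the orientation reversing involution $\mathfrak{R}$, exploiting Lemma~\ref{lem:LineReversing}. First I would unwind the definition of the pullback current: for any H\"older distribution coming from integrating $\omega_i$ along leaves, $\ell_i^{\rho}(\mathfrak{R}^*\alpha)$ equals the evaluation of $\alpha$ against the pushforward under $\mathfrak{R}$ of the leafwise $1$-form $\omega_i$. Since $\mathfrak{R}$ preserves the geodesic foliation $\mathcal{F}$ but reverses the orientation of each leaf, integrating $\omega_i$ along an $\mathfrak{R}$-image of an oriented leaf arc picks up both a sign from the orientation reversal and a substitution $u \mapsto -u$ on the eigenline data. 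So the core claim becomes the leafwise identity $\mathfrak{R}^*\omega_i = -\omega_{n-i+1}$ (as $1$-forms along the leaves of $\mathcal{F}$), and integrating this against $\alpha$ gives the proposition.

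To prove $\mathfrak{R}^*\omega_i = -\omega_{n-i+1}$, I would work on the universal cover. Fix $\ut \in T^1\St$ and a nonzero vector $X \in \widetilde{V}_i(\ut)$. By Lemma~\ref{lem:LineReversing}, $\widetilde{V}_i(\ut)$ and $\widetilde{V}_{n-i+1}(-\ut)$ are the same line in $\mathbb{R}^n$, so $X$ also lies in $\widetilde{V}_{n-i+1}(-\ut)$. The geodesic flow satisfies $g_{-t}(u) = \mathfrak{R}(g_t(\mathfrak{R}(u)))$, i.e. flowing for time $t$ starting from $-u$ lands at $\mathfrak{R}(g_{-t}(u)) = -g_{-t}(u)$; correspondingly the flat lift $\widetilde{G}_t$ acting on the constant section $X$ over the orbit through $-\ut$ is identified with $\widetilde{G}_{-t}$ acting over the orbit through $\ut$. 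Therefore the defining function for $\omega_{n-i+1}$ at $-\ut$, namely $t \mapsto \log\|G_t X\|_{g_t(-\ut)}$, is $s \mapsto \log\|G_{-s} X\|_{g_{-s}(\ut)}$ read through $\mathfrak{R}$, which is $f_X$ for $\omega_i$ composed with the time reversal $s \mapsto -s$. Differentiating, the time-reversal contributes the sign, giving $(\mathfrak{R}^*\omega_{n-i+1})(\ut) = -\omega_i(\ut)$, equivalently $\mathfrak{R}^*\omega_i = -\omega_{n-i+1}$ after relabeling $i \leftrightarrow n-i+1$. One should check the Riemannian metric $\|\cdot\|$ does not obstruct this: by Lemma~\ref{lem:Metric} the lengths are metric-independent, so I may assume $\|\cdot\|$ is $\mathfrak{R}$-invariant, or simply observe that the metric-dependent term is an exact leafwise differential that integrates to zero as in the proof of Lemma~\ref{lem:Metric}.

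Finally I would assemble: writing $\ell_i^{\rho}(\mathfrak{R}^*\alpha) = \int_{T^1S}\omega_i \, d(\mathfrak{R}^*\alpha) = \int_{T^1S}(\mathfrak{R}^*\omega_i)\,d\alpha = -\int_{T^1S}\omega_{n-i+1}\,d\alpha = -\ell_{n-i+1}^{\rho}(\alpha)$, where the middle equality is the change-of-variables for transverse distributions under a foliation-preserving homeomorphism, and the one before it is the leafwise identity just established (the sign absorbing the orientation reversal on leaves).

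The main obstacle I anticipate is the bookkeeping of signs and the identification of $\widetilde{G}_t$ over the orbit of $-\ut$ with $\widetilde{G}_{-t}$ over the orbit of $\ut$ — one must be careful that the ``trivial action on the $\mathbb{R}^n$ factor'' of the flat lift combined with the time reversal on the base produces exactly one sign (from $dt \mapsto -dt$) and no extra sign from the eigenline matching, which is precisely where Lemma~\ref{lem:LineReversing} is doing the work. A cleaner alternative, if the direct computation gets delicate, is to first verify the identity on closed leaves: for $\gamma \in \pi_1(S)$ one has $\mathfrak{R}^*\gamma = \gamma^{-1}$ (the same unoriented geodesic traversed backwards), $\ell_i^{\rho}(\mathfrak{R}^*\gamma) = \log|\lambda_i^{\rho}(\gamma^{-1})| = \log|1/\lambda_{n-i+1}^{\rho}(\gamma)| = -\ell_{n-i+1}^{\rho}(\gamma)$ by the indexing convention, and then invoke density of real linear combinations of closed curves in $\mathcal{C}(S)$ together with continuity of $\ell_i^{\rho}$ and of $\mathfrak{R}^*$ — though this only gives the identity on $\mathcal{C}(S)$, and the leafwise $1$-form argument is needed to extend it to all of $\CH(S)$.
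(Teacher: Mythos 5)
Your proposal is correct and follows essentially the same route as the paper: choose an $\mathfrak{R}$--invariant metric, use Lemma~\ref{lem:LineReversing} to match the eigenlines $\widetilde{V}_i$ over $\ut$ with $\widetilde{V}_{n-i+1}$ over $-\ut$, deduce the pullback relation between $\omega_i$ and $\omega_{n-i+1}$, and integrate against $\alpha$, with exactly one sign coming from the orientation reversal along the leaves. The only difference is bookkeeping: the paper writes $\mathfrak{R}^*\omega_i=\omega_{n-i+1}$ and lets the minus sign appear in the leafwise integration, while you absorb that sign into the $1$--form identity $\mathfrak{R}^*\omega_i=-\omega_{n-i+1}$; both conventions yield the same net sign, and your remark that the density-of-closed-curves shortcut only covers $\mathcal{C}(S)$ is accurate.
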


\begin{proof}
The involution $\mathfrak{R}$ acts freely on the unit tangent bundle $T^1S$ with quotient $\widehat{T}^1S=T^1S/\mathfrak{R}$. Thus, it also acts freely on the total space $T^1S\times_{\rho}\mathbb{R}^n$ with quotient a bundle  $\widehat T^1S\times_{\rho}\mathbb{R}^n$. Consider a Riemannian metric $\left\Vert \ \right \Vert$ on $T^1S\times_{\rho}\mathbb{R}^n$ obtained by lifting a Riemannian metric on $\widehat T^1S\times_{\rho}\mathbb{R}^n$. By construction, the Riemannian metric $\left\Vert \ \right \Vert$ is invariant under the involution $\mathfrak{R}$.

On the other hand, let $\widetilde{V}_i\to T^1\St$ and $\widetilde{V}_{n-i+1}\to T^1\St$ that lift the subbundles $V_i\to T^1S$ and $V_{n-i+1}\to T^1S$, respectively. By Lemma~\ref{lem:LineReversing}, for every $\ut\in T^1S$, for every $t\in \mathbb{R}$, the fibres $\widetilde{V}_i(\widetilde{g}_t(\mathfrak{R}(\ut)))$ and $\widetilde{V}_{n-i+1}(\widetilde{g}_{-t}(\ut))$ coincide as lines of $\mathbb{R}^n$. Therefore, $\mathfrak{R}^*\omega_i=\omega_{n-i+1}$. By integrating, it follows that, for every H\"older geodesic current  $\alpha\in\CH(S)$, $\ell_i^{\rho}(\mathfrak{R}^*\alpha)=-\ell_{n-i+1}^{\rho}(\alpha)$; note that a minus sign pops up due to the orientation reversing property of the involution $\mathfrak{R}$.
\end{proof}

\subsection{Differentiability of the lengths}
\label{DiffProperties}

We discuss some regularity properties for the length functions $\ell^\rho_i \colon \CH(S)\to \mathbb{R}$.

Thurston considers in \cite{Th1} the space of measured laminations $\mathcal{ML}(S)$ of $S$, that is a certain completion of the set of all isotopy classes of simple, closed, unoriented  curves in $S$. A fundamental feature of the space $\mathcal{ML}(S)$ is that it is a \emph{piecewise linear} manifold, which is homeomorphic to $\mathbb{R}^{6g-6}$. Therefore, every measured lamination admits tangent vectors, which allows to tackle \emph{tangentiality} properties for functions that are defined on $\mathcal{ML}(S)$. We refer the reader to \cite{PeH,Bon3} for additional details about the $\text{PL}$--structure of $\mathcal{ML}(S)$.

The definition of tangent vectors to $\mathcal{ML}(S)$ is rather abstract and not very convenient in practice. In \cite{Bon3}, Bonahon gives an analytical interpretation for the tangent vectors to $\mathcal{ML}(S)$ as certain H\"older geodesic currents. In particular, a consequence of this work is the following simple criterion.

Recall that the space of measured laminations $\mathcal{ML}(S)$ can be viewed as the closure in $\CH(S)$ of the set of positive real, linear combinations elements of the form  
$$
\bar{\gamma}=\frac{1}{2}\gamma+\frac{1}{2}\mathfrak{R}^*\gamma
$$ 
where: $\gamma \in \CH(S)$ is a closed, oriented curve; and $\mathfrak{R} \colon T^1S\to T^1S$ is the orientation reversing involution; see \S\ref{sect:Currents}. 

\begin{thm}[Bonahon \cite{Bon3}]
\label{criterion}
Let $f \colon \mathcal{ML}(S)\to \mathbb{R}$ be a homogeneous function defined on the space of measured lamination $\mathcal{ML}(S)$. If $f$ admits a continuous, linear extension $f \colon \CH(S)\to \mathbb{R}$ to the vector space of H\"older geodesic currents $\CH(S)$, then $f \colon \mathcal{ML}(S)\to \mathbb{R}$ is \emph{tangentiable}, namely it is differentiable with respect to directions of tangent vectors to $\mathcal{ML}(S)$. 
\end{thm}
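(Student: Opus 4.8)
The plan is to exploit the fact that tangentiability of a homogeneous function on a piecewise-linear manifold only requires one-sided directional derivatives, and that on measured laminations these directions are realized by explicit H\"older geodesic currents coming from Bonahon's analytic description of the tangent space $T_\alpha\mathcal{ML}(S)$. So I would not try to differentiate $f$ directly on the abstract PL-manifold; instead I would reduce everything to the linear structure of $\CH(S)$.

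First I would recall Bonahon's identification (from \cite{Bon3}): if $\alpha\in\mathcal{ML}(S)$ and $(\alpha_t)_{t\geq 0}$ is a smooth one-parameter family in $\mathcal{ML}(S)$ with $\alpha_0=\alpha$, then the one-sided tangent vector $\dot\alpha_0=\frac{d}{dt^+}\alpha_t|_{t=0}$ exists as an element of $\CH(S)$, and moreover $\alpha_t$ admits a first-order expansion $\alpha_t=\alpha+t\,\dot\alpha_0+o(t)$ in the weak-$*$ topology of $\CH(S)$; that is, for every transverse surface $D$ and every compactly supported H\"older test function $\varphi$ on $D$, one has $\alpha_t(\varphi)=\alpha(\varphi)+t\,\dot\alpha_0(\varphi)+o(t)$ as $t\to 0^+$. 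This is precisely the content that makes the PL-tangent vectors of $\mathcal{ML}(S)$ behave like genuine directional derivatives inside the ambient vector space.

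Next, given the continuous linear extension $f\colon\CH(S)\to\mathbb{R}$ hypothesized in the statement, I would simply apply $f$ to this expansion. Choosing a finite cover of $T^1S$ by flow boxes $\mathcal{U}_j\cong D_j\times(0,1)$ and a subordinate partition of unity as in \S\ref{sect:LengthsCurrents}, the linear functional $f$ is a finite sum $f(\beta)=\sum_j\beta_{D_j}(\psi_j)$ for fixed compactly supported H\"older functions $\psi_j$ on the $D_j$ (the transverse integrals of the relevant $1$--form against the partition of unity). Applying this to $\beta=\alpha_t$ and using the weak-$*$ first-order expansion termwise gives $f(\alpha_t)=\sum_j\bigl(\alpha_{D_j}(\psi_j)+t\,(\dot\alpha_0)_{D_j}(\psi_j)+o(t)\bigr)=f(\alpha)+t\,f(\dot\alpha_0)+o(t)$, since the sum is finite and each $o(t)$ term is controlled. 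Dividing by $t$ and letting $t\to 0^+$ yields $\frac{d}{dt^+}f(\alpha_t)|_{t=0}=f(\dot\alpha_0)$, which is exactly the asserted tangentiability; homogeneity of $f$ guarantees compatibility with the scaling present in the PL-structure of $\mathcal{ML}(S)$.

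The main obstacle is not in this formal manipulation but in citing Bonahon's work at the right level of precision: one must be sure that the abstract PL-tangent vectors of $\mathcal{ML}(S)$ are exactly the H\"older geodesic currents $\dot\alpha_0$ appearing above, and that smooth one-parameter families in the PL-sense do produce a genuine weak-$*$ first-order Taylor expansion in $\CH(S)$ (not merely a one-sided derivative with no remainder control). This is established in \cite{Bon3}, where the tangent space to $\mathcal{ML}(S)$ at $\alpha$ is described as a cone of H\"older geodesic currents transverse to the support of $\alpha$, and where the differentiable structure is set up precisely so that continuous linear functions on $\CH(S)$ restrict to tangentiable functions on $\mathcal{ML}(S)$. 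Once that input is invoked, the proof is the short linearity argument above; I would therefore present the statement essentially as a corollary of \cite{Bon3}, spelling out only the flow-box decomposition of $f$ to make the termwise passage to the limit transparent.
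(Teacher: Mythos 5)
The paper does not actually prove this statement: it is quoted directly from Bonahon \cite{Bon3} (hence the attribution in the theorem header), and the only thing done with it here is to apply it to the length functions in Corollary~\ref{Differentiability}. Your proposal is correct in essentially the same sense, because the two substantive inputs you invoke --- that the PL-tangent vectors to $\mathcal{ML}(S)$ are realized as H\"older geodesic currents, and that a smooth family $(\alpha_t)_{t\geq 0}$ admits a weak-$*$ first-order expansion $\alpha_t=\alpha_0+t\,\dot\alpha_0+o(t)$ --- are precisely the content of \cite{Bon3}; granted those, the remaining linearity and continuity argument you give is correct, so your write-up amounts to the same reduction to \cite{Bon3} that the paper makes, with the easy final step spelled out. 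Two small remarks. First, the flow-box decomposition of $f$ is dispensable: once $(\alpha_t-\alpha_0)/t\to\dot\alpha_0$ in the weak-$*$ topology of $\CH(S)$, continuity and linearity of $f$ applied to the difference quotients already give $\frac{d}{dt^+}{f(\alpha_t)}_{|t=0}=f(\dot\alpha_0)$, with no need to represent $f$ as a finite sum of transverse evaluations. Second, your justification of that finite-sum form (``the transverse integrals of the relevant $1$--form against the partition of unity'') tacitly assumes $f$ is one of the length functions $\ell^\rho_i$ built in \S\ref{sect:LengthsCurrents}, which is not part of the hypothesis; if you want such a representation for an arbitrary continuous linear $f$, it follows instead from the general fact that the continuous dual of a space carrying a weak topology is spanned by the defining evaluation functionals. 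Neither point affects the validity of the argument, but trimming the unnecessary decomposition would make clear that the only real content is Bonahon's identification of tangent vectors with H\"older geodesic currents.
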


For every $i=1$, $\ldots$ , $n$, let ${\ell^\rho_{i}}_{| \mathcal{ML}(S)} \colon \mathcal{ML}(S)\to \mathbb{R}$ be the functions obtained by restricting the length functions $\ell^\rho_i \colon \CH(S)\to \mathbb{R}$ to the space of measured laminations $\mathcal{ML}(S)$. As a straitghforward consequence of the criterion of Theorem~\ref{criterion}, we have:

\begin{cor}
\label{Differentiability}
The functions ${\ell^\rho_{i}}_{| \mathcal{ML}(S)} \colon \mathcal{ML}(S)\to \mathbb{R}$ are tangentiable, namely, if $(\alpha_{t})_{t\geq0} \subset \mathcal{ML}(S)$ is a smooth $1$--parameter family of measured laminations with tangent vector $\dot{ \alpha_0}=\frac{d}{dt^+}{\alpha_{t}}_{|t=0}$ at $\alpha_0$, then
$$
\frac{d}{d t^+}{\ell_i^{\rho}(\alpha_{t})}_{|t=0}=\ell_i^{\rho}(\dot\alpha_0).
$$
\end{cor}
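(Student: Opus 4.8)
The plan is to deduce Corollary~\ref{Differentiability} directly from Bonahon's criterion (Theorem~\ref{criterion}), so that essentially all the work has already been done. First I would observe that the function ${\ell^\rho_{i}}_{| \mathcal{ML}(S)}$ is the restriction to $\mathcal{ML}(S)$ of the function $\ell^\rho_i \colon \CH(S)\to\mathbb{R}$ constructed in Theorem~\ref{thm:LengthFunctions}; by that theorem $\ell^\rho_i$ is linear and continuous on the whole vector space $\CH(S)$ of H\"older geodesic currents. Hence ${\ell^\rho_{i}}_{| \mathcal{ML}(S)}$ is precisely a homogeneous function on $\mathcal{ML}(S)$ that extends to a continuous linear function on $\CH(S)$, which is exactly the hypothesis of Theorem~\ref{criterion}.

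Concretely, I would spell this out in two short steps. Step one: verify homogeneity of ${\ell^\rho_{i}}_{| \mathcal{ML}(S)}$. For a positive real $t$ and $\alpha\in\mathcal{ML}(S)$ we have $t\alpha\in\mathcal{ML}(S)$ (the space of measured laminations is stable under positive scalar multiplication), and $\ell^\rho_i(t\alpha)=t\,\ell^\rho_i(\alpha)$ by linearity of $\ell^\rho_i$ on $\CH(S)$, established in Lemma~\ref{pro:ContHomo}. Step two: recall from \S\ref{sect:Currents} that $\mathcal{ML}(S)$ sits inside $\CH(S)$ as the closure of positive real linear combinations of the currents $\bar\gamma=\tfrac12\gamma+\tfrac12\mathfrak{R}^*\gamma$, so ${\ell^\rho_{i}}_{| \mathcal{ML}(S)}$ genuinely arises by restricting a continuous linear map $\CH(S)\to\mathbb{R}$. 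Applying Theorem~\ref{criterion} with $f={\ell^\rho_{i}}_{| \mathcal{ML}(S)}$ then yields tangentiability: for a smooth $1$--parameter family $(\alpha_t)_{t\ge0}\subset\mathcal{ML}(S)$ with $\dot\alpha_0=\frac{d}{dt^+}{\alpha_t}_{|t=0}$, the one-sided derivative $\frac{d}{dt^+}{\ell_i^\rho(\alpha_t)}_{|t=0}$ exists and, because the linear extension realizes the tangent functional, equals $\ell_i^\rho(\dot\alpha_0)$.

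I do not anticipate a genuine obstacle here, since the analytic heavy lifting is carried out in Bonahon's work \cite{Bon3} and invoked as Theorem~\ref{criterion}; the only point requiring a little care is to make sure the identification of tangent vectors $\dot\alpha_0$ to $\mathcal{ML}(S)$ with H\"older geodesic currents is the one used in Theorem~\ref{criterion}, so that the symbol $\ell_i^\rho(\dot\alpha_0)$ on the right-hand side is literally the value of the continuous linear extension at the current $\dot\alpha_0\in\CH(S)$. This is exactly Bonahon's analytic interpretation of the $\mathrm{PL}$-tangent vectors of $\mathcal{ML}(S)$, so it should be cited rather than reproved. In short, the proof is the single line: \emph{${\ell^\rho_{i}}_{|\mathcal{ML}(S)}$ is homogeneous and extends continuously and linearly to $\CH(S)$ by Theorem~\ref{thm:LengthFunctions}, hence is tangentiable by Theorem~\ref{criterion}, with derivative $\ell_i^\rho(\dot\alpha_0)$.}
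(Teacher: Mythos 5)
Your proposal is correct and follows exactly the paper's own route: the paper deduces the corollary as an immediate consequence of Bonahon's criterion (Theorem~\ref{criterion}), applied to the length function $\ell^\rho_i$, which is homogeneous on $\mathcal{ML}(S)$ and admits the continuous linear extension to $\CH(S)$ provided by Theorem~\ref{thm:LengthFunctions}. Your extra care about identifying tangent vectors to $\mathcal{ML}(S)$ with H\"older geodesic currents in Bonahon's sense is appropriate and consistent with how the paper invokes \cite{Bon3}.
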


\section{An asymptotic estimate for the eigenvalues}
\label{application}

As another application of the continuity property of the lengths $\ell^\rho_i$, we now prove the following estimate. 

\begin{thm}
\label{pro:application}
 Let $\rho\colon \pi_1(S) \to \PSL_n(\mathbb{R})$ be a Hitchin representation, and let $\alpha$, $\beta\in \pi_1(S)$. For every $i=1$, $\ldots$ , $n$, the ratio
$$
\frac{\lambda^\rho_i(\alpha^m \beta)}{\lambda^\rho_i(\alpha)^m}
$$
has a finite limit as $m$ tends to $\infty$. This limit is equal to $e^{\ell_i^\rho(\dot\alpha)}$, where $\dot\alpha$ is the H\"older geodesic current $\dot\alpha= \lim_{m\to\infty} \alpha^m\beta-m\alpha \in \CH(S)$.
\end{thm}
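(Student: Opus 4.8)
The plan is to reduce the statement about eigenvalue ratios to the continuity and linearity of the length functions $\ell_i^\rho \colon \CH(S)\to\mathbb{R}$ established in Theorem~\ref{thm:LengthFunctions}. First I would observe that, by the defining property of the lengths on closed curves, $\log|\lambda_i^\rho(\alpha^m\beta)| - m\log|\lambda_i^\rho(\alpha)| = \ell_i^\rho(\alpha^m\beta) - m\,\ell_i^\rho(\alpha) = \ell_i^\rho(\alpha^m\beta - m\alpha)$, using linearity of $\ell_i^\rho$ on $\CH(S)$ and the fact that the free homotopy class of $\alpha^m\beta$ and $m$ copies of $\alpha$ are honest elements of $\CH(S)$. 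So the logarithm of the ratio in question is exactly the evaluation of the linear functional $\ell_i^\rho$ on the H\"older geodesic current $\alpha^m\beta - m\alpha$. Hence, if this sequence of currents converges in $\CH(S)$ to some current $\dot\alpha$, then continuity of $\ell_i^\rho$ forces $\log|\lambda_i^\rho(\alpha^m\beta)/\lambda_i^\rho(\alpha)^m| \to \ell_i^\rho(\dot\alpha)$, i.e. the ratio of absolute values tends to $e^{\ell_i^\rho(\dot\alpha)}$; a separate, easy sign argument (the eigenvalues are real by Theorem~\ref{thm:Labourie}, and for large $m$ the sign of $\lambda_i^\rho(\alpha^m\beta)$ stabilizes to the $m$-th power of the sign of $\lambda_i^\rho(\alpha)$ times a fixed sign, which one checks by looking at how $\rho(\alpha)^m\rho(\beta)$ acts on the $i$-th eigenline of $\rho(\alpha)$) upgrades this to the signed statement.

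The real content, therefore, is the claim that $\alpha^m\beta - m\alpha$ converges in $\CH(S)$. This is a statement purely about geodesic currents and the geometry of $\widetilde S$, independent of $\rho$. The key step is a geometric picture in the universal cover: fix a hyperbolic metric $m_0$ on $S$; the axis of $\alpha^m\beta$ acting on $\widetilde S$ converges, as $m\to\infty$, to a bi-infinite geodesic configuration, namely on longer and longer segments it fellow-travels the axis of $\alpha$, while near its two ``ends'' it peels off along translates of the geodesic determined by $\beta$. Concretely, writing $\alpha^+,\alpha^-$ for the attracting and repelling fixed points of $\alpha$ on $\partial_\infty\widetilde S$, the axis of $\alpha^m\beta$ shares an endpoint-pattern that converges to the union of the $\alpha$-axis and its $\beta$-images, and the defect $\alpha^m\beta - m\alpha$, which measures exactly the difference between ``$\alpha^m\beta$ once'' and ``$\alpha$ $m$ times'', stabilizes to a fixed, compactly-supported-in-$T^1S$ transverse H\"older distribution $\dot\alpha$. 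I would make this precise by working in a flow box around the closed leaf $\alpha^*$ and showing that, for $j$ in the ``middle'' of the range $0\le j<m$, the $j$-th passage of the axis of $\alpha^m\beta$ through that flow box agrees with a passage of $\alpha^*$ up to an error that is summable and eventually constant; the ``boundary'' contributions from the two ends are a bounded, $m$-independent correction which is the source of $\dot\alpha$. This convergence is most cleanly phrased via Bonahon's description of currents as transverse distributions and the compactness/continuity properties of the embedding of closed curves into $\CH(S)$; alternatively one can cite the analogous statement for measure currents and then note the limiting current is automatically H\"older because it is a finite combination of arcs of leaves with H\"older weights.

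The main obstacle I anticipate is precisely this convergence $\alpha^m\beta - m\alpha \to \dot\alpha$ in the weak-$*$ topology of $\CH(S)$: one has to control the transverse distributions $(\alpha^m\beta - m\alpha)_D$ on an arbitrary transverse surface $D$ uniformly in $m$ and identify the limit, and the subtlety is that $\CH(S)$ carries the H\"older (not just the measure) topology, so one needs H\"older, not merely weak, estimates on how the axis of $\alpha^m\beta$ approximates that of $\alpha$ together with the two $\beta$-ends. I would handle this by an explicit hyperbolic-geometry estimate: the attracting/repelling fixed points of $\alpha^m\beta$ converge exponentially fast (in $m$) to those of $\alpha$, so the corresponding geodesics converge exponentially in the $C^0$ and H\"older sense on compact sets, and the transverse distribution picks up only a bounded number of ``extra'' intersections near the ends; summing the exponentially small corrections gives convergence of $(\alpha^m\beta - m\alpha)_D$ in the relevant norm. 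Once this is in hand, everything else — linearity, the identification of the limit as $e^{\ell_i^\rho(\dot\alpha)}$, and the sign bookkeeping — is immediate from Theorem~\ref{thm:LengthFunctions} and Theorem~\ref{thm:Labourie}.
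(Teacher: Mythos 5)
Your proposal is correct and follows essentially the same route as the paper: reduce to the convergence of the currents $\alpha^m\beta-m\alpha$ in $\CH(S)$, establish that convergence by the exponential spiraling of the leaf of $\alpha^m\beta$ around the closed leaf $\alpha^*$ (so that, against H\"older test functions on a transverse surface, the corrections are summable and the intersection-count discrepancy is eventually a constant), and then conclude by linearity and continuity of $\ell_i^\rho$. Your extra remark about sign bookkeeping is a harmless refinement that the paper's proof passes over by working with the moduli $|\lambda_i^\rho|$.
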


\begin{proof}
Without loss of generality, we can assume that $\alpha$ is primitive in $\pi_1(S)$. As in \S\ref{sect:Currents}, identify the closed, oriented curves $\alpha$ and $\alpha^m\beta$ with the corresponding closed leaves $\alpha^*$ and ${\alpha^m\beta}^*$ of the geodesic foliation $\mathcal{F}$ of $T^1S$. Endowing these closed leaves with the transverse Dirac measures that they define, we can regard $\alpha$ and $\alpha^m\beta$ as H\"older geodesic currents.

For $m$ large enough, the closed leaf $\alpha^m\beta$ is made up of one piece of uniformly bounded length, and of another piece that wraps $m$ times around $\alpha$. As $m$ tends to $\infty$, this closed leaf converges to the union of the closed orbit $\alpha$ and of an infinite leaf $\alpha^\infty\beta$ of the geodesic foliation whose two ends spiral around $\alpha$; Figure~\ref{Fig1} shows the situation in the surface $S$.

\begin{figure}[htbp]
\SetLabels
(.205*.68) $\beta$ \\
(.168*-.05) $\alpha$ \\
(.575*.68) $\alpha^m\beta$ \\
(.93*.68) $\alpha^{\infty}\beta$ \\
(.57*.85) $m=5$ \\
(.95*.85) $m=\infty$ \\
\endSetLabels
\centerline{\AffixLabels{\includegraphics[width=12cm, height=6cm]{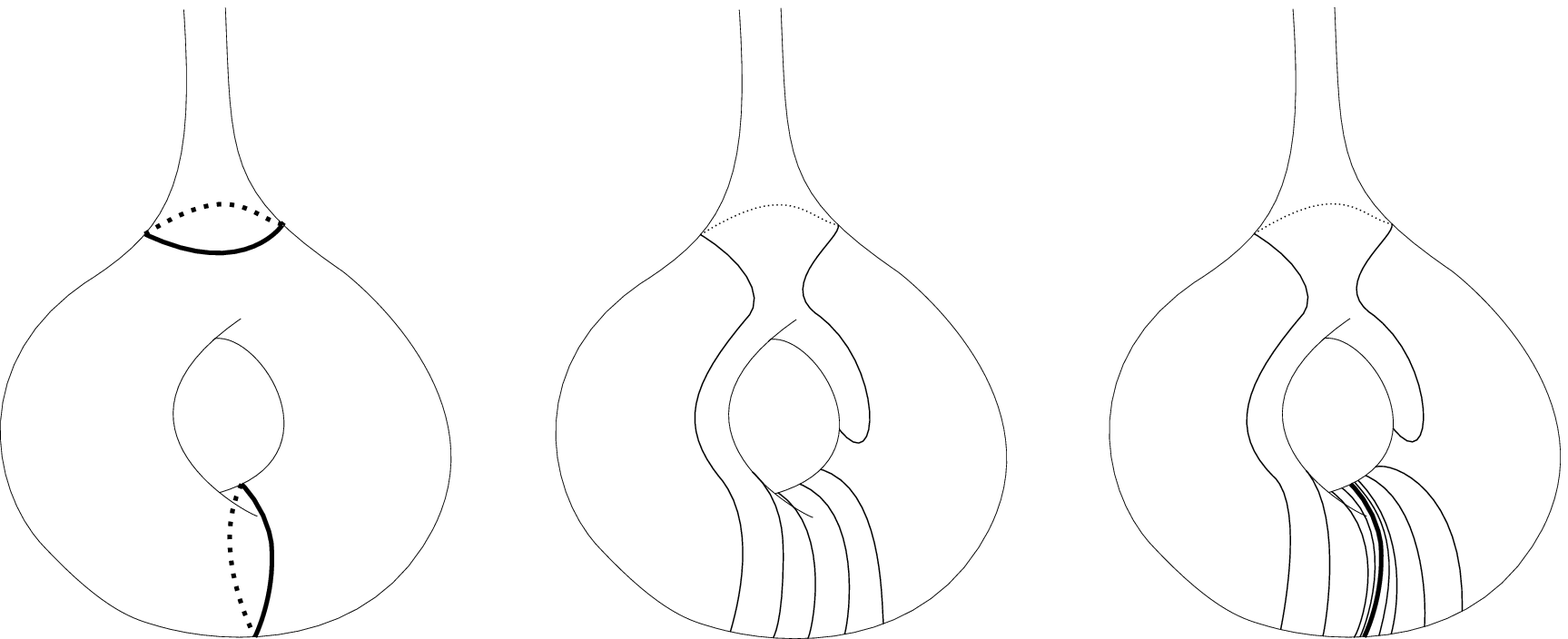}}}
\caption{}
\vskip 5pt
\label{Fig1}
\end{figure}

More precisely, let $D\subset T^1S$ be a small surface transverse to the geodesic foliation $\mathcal{F}$ that intersects the closed leaf $\alpha^*$ in one point $x^\infty_\infty$, as shown on Figure~\ref{Fig2}. The infinite leaf ${\alpha^\infty\beta}^*$ intersects $D$ in two sequences of points $x^\infty_1$, $x^\infty_2$, \dots{} and $y^\infty_1$, $y^\infty_2$, \dots{} in such a way that $x^\infty_1$, $x^\infty_2$, \dots{} converges in this order to one end of $\alpha^\infty\beta$, and $y^\infty_1$, $y^\infty_2$, \dots{} converges in this order to the other end. Moreover, the two sequences $x^\infty_1$, $x^\infty_2$, \dots{} and $y^\infty_1$, $y^\infty_2$, \dots{} both converge to the point $x^\infty_\infty$.

Likewise, the closed leaf ${\alpha^m\beta}^*$ intersects $D$ in points $x^m_1$, $x^m_2$, \dots{}, $x^m_{k_m}$, $y^m_1$, $y^m_2$, \dots{}, $y^m_{l_m}$ (see Figure 2), in such a way that, as $m$ tends to $\infty$, each $x^m_k$ converges to $x^\infty_k$, and each $y^m_l$ converges to $y^\infty_l$. Besides, the total number  $k_m+l_m$ of points is of the order of $m$, and more precisely, the difference $m-(k_m+l_m)$ is equal to a constant $c_D$ for $m$ large enough. As a result, if $\varphi$ is a continuous function defined on $D$, 
\begin{align*}
\lim_{m\to\infty}\frac{1}{m}\alpha^m\beta(\varphi)-\alpha(\varphi)
&=\lim_{m\to\infty}\frac{1}{m}\Biggl ( \sum_{i=1}^{k_m}\varphi(x^m_i) + \sum_{j=1}^{l_m}\varphi(y^m_j)\\
&\qquad\qquad\qquad -(k_m+l_m+c_D) \varphi(x^\infty_\infty) \Biggr) \\
&=0.
\end{align*}

\begin{figure}[htbp]
\SetLabels
(.6*.87) $D$ \\
(.18*.63) $\alpha$ \\
(.92*1) $\alpha^m\beta$ \\
(.30*.9) $m=4$ \\
(.52*.71) $x_{\infty}^{\infty}$ \\
(.53*.73) $x_{2}^{4}$ \\
(0.51*.78) $x_{1}^{4}$ \\
(.53*.67) $y_{2}^{4}$ \\
(.49*.61) $y_{1}^{4}$ \\
(.30*.33) $m=\infty$ \\
(.25*.18) $\alpha$ \\
(.6*.39) $D$ \\
(.53*.21) $x_{\infty}^{\infty}$ \\
(0.52*.32) $x_{1}^{\infty}$ \\
(0.54*.26) $x_{2}^{\infty}$ \\
(0.61*.24) $x_{3}^{\infty}$ \\
(0.5*.13) $y_{1}^{\infty}$ \\
(0.59*.21) $y_{3}^{\infty}$ \\
(0.56*.18) $y_{2}^{\infty}$ \\
\endSetLabels
\centerline{\AffixLabels{\includegraphics[width=10cm, height=10cm]{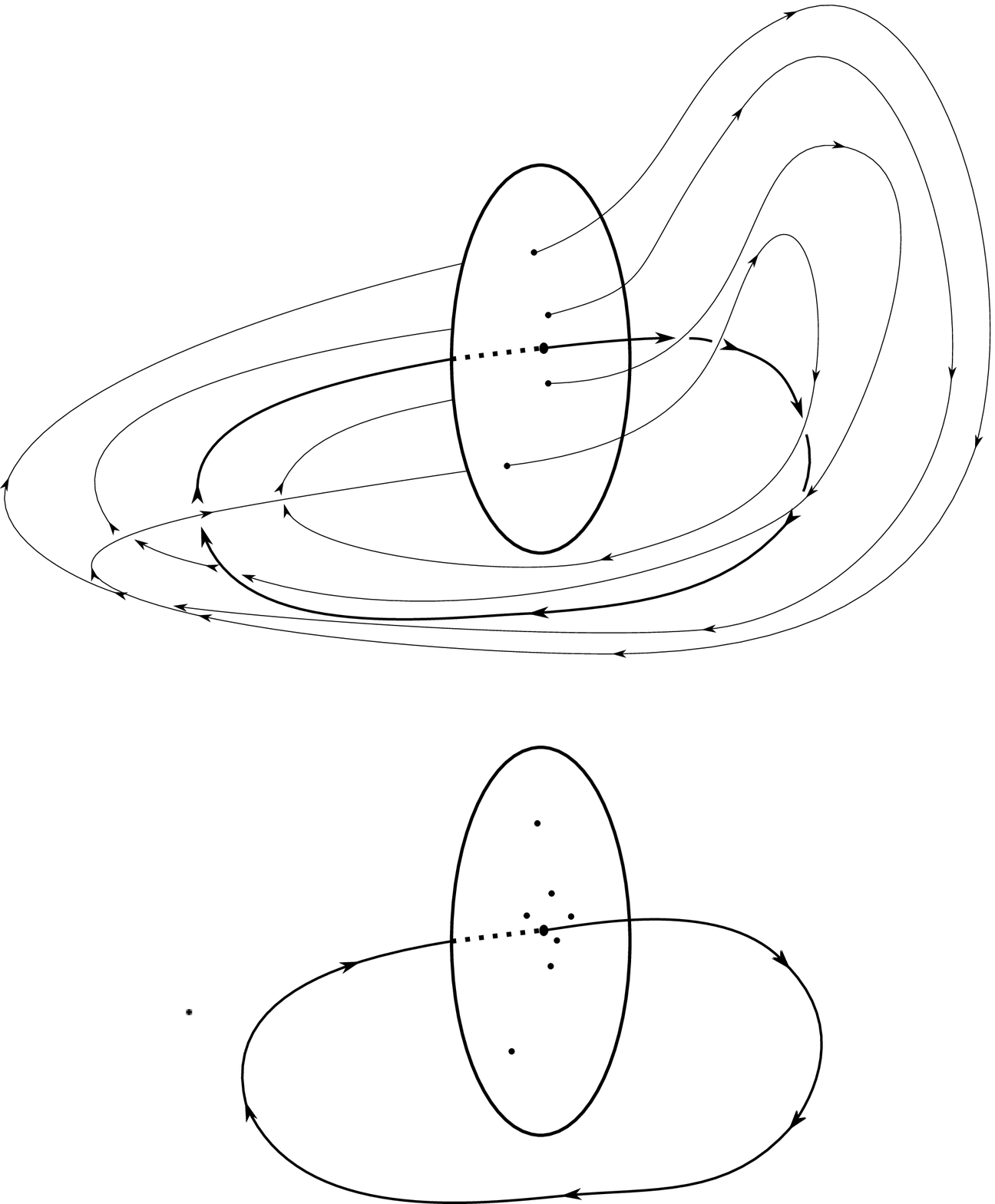}}}
\caption{}
\vskip 5pt
\label{Fig2}
\end{figure}

If, in addition, we assume $\varphi$ to be H\"older continuous, we can refine the above estimate. Because of the compacity of $S$, classical hyperbolic estimates guarantee that there exists some bound $M>0$ such that, for every $m\geq M$, both sequences $x^m_{i}$ and $y^m_i$ converge uniformely to $x^\infty_\infty$ as $i$ tends to $\infty$, and this convergence is of exponential order. Therefore
\begin{align*}
\lim_{m\to+\infty}\left ( \alpha^m\beta-m\alpha \right )(\varphi)&=\lim_{m\to+\infty} \sum_{i=1}^{k_m}\bigl ( \varphi(x^{m}_i)-\varphi(x^\infty_\infty) \bigr ) \\
&\qquad \qquad \qquad+\sum_{j=1}^{l_m}\bigl ( \varphi(y^{m}_j)-\varphi(x^\infty_\infty) \bigr )\\
&\qquad \qquad \qquad \qquad \qquad-c_D\varphi(x_\infty^\infty)\\
&=\sum_{i=1}^{\infty}\bigl ( \varphi(x^{\infty}_i)-\varphi(x^\infty_\infty) \bigr ) \\
& \qquad \qquad \qquad +\sum_{j=1}^{\infty}\bigl ( \varphi(y^{\infty}_j)-\varphi(x^\infty_\infty) \bigr )\\
& \qquad \qquad \qquad \qquad \qquad-c_D\varphi(x_\infty^\infty)
\end{align*}
exists and is finite.

In other words, the above calculation shows that the limit
$$
\lim_{m\to\infty}  \alpha^m\beta-m\alpha = \dot\alpha
$$
exists in the space of H\"older geodesic currents $\CH(S)$. The limit H\"older geodesic current $\dot\alpha$ is supported in the union of the closed leaf  $\alpha^*$, and of the infinite leaf ${\alpha^\infty\beta}^*$ whose two ends spiral around $\alpha^*$. 

Thus, by linearity and continuity of the length functions $\ell_i^\rho$, 
$$
\lim_{m \to \infty}\ell_{i}^{\rho}(\alpha^m\beta)-m\ell_{i}^{\rho}(\alpha)
$$
exists and is equal to $\ell_{i}^{\rho}(\dot\alpha)$. Taking the exponential on both sides, we conclude that  
$$
\frac{\lambda^\rho_i(\alpha^m \beta)}{\lambda^\rho_i(\alpha)^m}
$$ 
converges to $\mathrm e^{\ell_i^\rho(\dot\alpha)}$, which proves the required result.
\end{proof}

We conclude this section with one last observation. 
$$
\dot\alpha=\lim_{m\to\infty} \alpha^m\beta-m\alpha=\lim_{m\to\infty} \frac{\frac 1m \alpha^m\beta-\alpha}{\frac 1m}
$$
is very reminiscent of the expression of a derivative. In fact, there exists a $1$--parameter family of H\"older geodesic currents $\alpha_t \in \CH(S)$, $t \in \left [0, \varepsilon \right ]$, such that $\alpha_{0}=\alpha$, $\alpha_{\frac{1}{m}}=\frac{1}{m}\alpha^m \beta$, and $\dot{ \alpha}=\frac{d}{dt^+}{\alpha_{t}}_{|t=0}$. As a result, the distribution $\dot\alpha$ should be understood as a tangent vector at the point $\alpha$, and the above estimate comes as a consequence of the first order approximation
$$
\ell_i^{\rho}(\alpha_t)\approx\ell_i^{\rho}(\alpha_0)+t\frac{d}{d t^+}{\ell_i^{\rho}(\alpha_{t})}_{|t=0}
$$
where $\frac{d}{d t^+}{\ell_i^{\rho}(\alpha_{t})}_{|t=0}=\ell_i^{\rho}(\dot\alpha)$ by the previous facts. This should be compared with the differentiability property of the length functions $\ell^\rho_i$ of Corollary~\ref{Differentiability}.

\begin{rem}
In a preprint \cite{PoS} following the publication of an earlier version of the present paper, M. Pollicott and R. Sharp proposed an alternative proof of the estimate of Theorem~\ref{pro:application} that is based on symbolic dynamics. Their method enables to improve the result. They also prove asymptotic growth estimates for the length functions $\ell^\rho_i$. In particular, their approach makes crucially use of the $1$--forms $\omega_i$  introduced in \S\ref{sect:Forms}.
\end{rem}

\end{document}